
\documentclass[10pt,draft,reqno]{amsart}
     \makeatletter
     \def\section{\@startsection{section}{1}%
     \z@{.7\linespacing\@plus\linespacing}{.5\linespacing}%
     {\bfseries
     \centering
     }}
     \def\@secnumfont{\bfseries}
     \makeatother
\setlength{\textheight}{19.5 cm} \setlength{\textwidth}{12.5 cm}
\newtheorem{theorem}{Theorem}[section]
\newtheorem{lemma}[theorem]{Lemma}
\newtheorem{proposition}[theorem]{Proposition}

\theoremstyle{definition}
\newtheorem{definition}[theorem]{Definition}

\theoremstyle{remark}
\newtheorem{remark}[theorem]{Remark}
\numberwithin{equation}{section} \setcounter{page}{1}




\newcommand{\Real}{\mathbb{R}}

\newcommand{\R}{\mathbb{R}}



\begin{document}

\title[Entropy Chaos and BEC]{ Entropy Chaos and Bose-Einstein Condensation}

\author[S. Albeverio, F.C. De Vecchi and S. Ugolini]{\bf Sergio Albeverio$^1$, Francesco C. De Vecchi$^2$ and Stefania Ugolini$^2$}

\dedicatory{$^1$ Department of Applied Mathematics, University of
Bonn, HCM, BiBoS, IZKS; CERFIM, Locarno.
albeverio@iam.uni-bonn.de\\
$^2$ Dipartimento di Matematica, Universit\`a di Milano, via
Saldini 50, Milano. francesco.devecchi@unimi.it,
stefania.ugolini@unimi.it}

\begin{abstract}
We prove the entropy-chaos property for the system of N
undistinguishable interacting diffusions rigorously associated
with the ground state of N trapped Bose particles in the
Gross-Pitaevskii scaling limit of infinite particles. On the
path-space we show that the sequence of probability measures of
the one-particle interacting diffusion weakly converges to a limit
probability measure, uniquely associated with the minimizer of the
Gross-Pitaevskii functional.
\end{abstract}

\subjclass[2000] {Primary  60J60, 60K35, 81S20, 94A17; Secondary
26D15,81S20, 60G10,60G40}

\keywords{Bose-Einstein Condensation, Gross-Pitaevskii scaling
limit, Stochastic Mechanics, interacting Nelson diffusions,
entropy chaos, Kac's chaos, convergence of probability measures on
path space.}

\maketitle



\section {\bf Introduction}

Between 2000 to 2002 Lieb, Seiringer and Yngvason solved the
problem of giving a physical and mathematical justification of the
Gross-Pitaevskii (GP) model for a quantum mechanical particle in a
Bose-Einstein condensate (\cite{Lieb},\cite{Lieb2}, see also
\cite{LiebBook}). Starting from the N body Hamiltonian, describing
the system of N Bose particles in a suitable trapping potential
$V$ which interact through a pairwise potential $v$ (see Section
2), they proved that the GP mathematical model can be rigorously
obtained from the N body Hamiltonian by performing a suitable
limit of infinitely many particles together with a well-defined
re-scaling of the interaction potential $v$. In particular they
showed that in this GP scaling limit the one-particle quantum
mechanical energy converges to the minimum of the GP functional
and that the corresponding interaction energy asymptotically
localizes in the points where the other particles are. Moreover
they prove the complete or exact Bose-Einstein Condensation
(\cite{Lieb2}) in terms of the complete factorization of the
n-particles reduced density matrix.

More recently, there has been proposed a stochastic description of
a Bose-Einstein condensate (the first time in \cite{LM}) by using
the well-known Nelson map. In 2011 (\cite{MU}) it has been shown
that under the hypothesis of continuous differentiability of the
many body ground state wave function one can define a well-defined
one particle stochastic process which, in the GP scaling limit,
continuously remains outside a time dependent interaction-region
with probability one. When the one-particle process is suitably
stopped, its convergence in total variation sense can be proved
using the relative entropy approach. The convergence is towards a
limit diffusion process whose drift is uniquely determined by the
minimizer of the Gross-Pitaevskii functional, usually called wave
function of the condensate. Successively the phenomenon of the
asymptotic localization of relative entropy has been investigated
(\cite{MU1}) as a probabilistic counterpart of the asymptotic
localization of the interaction energy. In \cite{Ugolini}  Kac's
chaos was established for the symmetric probability law of the N
interacting diffusions system associated to the ground state of
the N body Hamiltonian under the GP scaling limit. Since the
Nelson map cannot be applied to a non linear Hamiltonian, the
problem of correctly individuating the process corresponding to
the minimizer of the (non linear) Gross-Pitaevskii functional had
to be faced in \cite{AU}. Performing a sort of Doob h-transform of
the GP Hamiltonian a non linear diffusion generator with a killing
rate governed by the wave function of the condensate was derived.
The density-dependent killing rate is the probabilistic way to
describe the self-interaction suffered from the usual diffusion
process with drift of gradient type by the generic other particle
which shares the same invariant probability density. With the
introduction of a proper one-particle relative entropy an
existence theorem for the probability measure
associated to the minimizer of the GP functional was proved in \cite{DeVU}.\\
In the present paper we focus on some other probability measures
convergence problems related to the GP scaling limit described
above. We prove that the entropy chaos property holds for the N
interacting diffusions system. This is a stronger chaotic property
than Kac's chaos property. The result is obtained, following
\cite{HaMi}, by using Kac's chaos result plus a regularity
condition on the trapping potential $V$ which guarantees a finite
moment condition on the corresponding density measures. While
Kac's chaos is expressed in terms of weak convergence of all the
n-marginal laws toward the n-fold tensor product of the asymptotic
probability density, the entropy chaos, which is given by the weak
convergence of the one-particle marginal measure plus the
convergence of the entropy functionals, allows to prove a total
variation convergence result for the same sequence of n-marginal
laws. This result concerns the fixed time marginal laws of our N
interacting diffusions on the product space $\R^{3N}$. Since  the
interaction between the particles asymptotically concentrates on a
random region having Lebesgue measure zero (see \cite{MU}) but it
does not disappear, the convergence problem of the one-particle
probability measure on the path space is not trivial and one
cannot hope to find a convergence result as strong as the total
variation one. In \cite{MU}, in fact, the latter has been proved
only for the stopped version of the one-particle non-markovian
diffusion. In this paper we show that on the path space a weak
convergence result can be obtained for the one-particle diffusion
process by taking advantage of some relevant properties of the
corresponding sequences of stopping times. In spite of the strong
coupling between the probability measure and the associated
stopping time the crucial property is that, for all $t>0$, the
probability that the actual stopping time is larger than t
converges
to one in the infinite particles limit (\cite{MU}).\\
In Section 2  Carlen's class of N interacting diffusions
rigorously associated with the ground state of the N body
Hamiltonian for N Bose particles is
presented.\\
In Section 3 the main analytical results obtained by Lieb,
Seiringer
and Yngvason for the specific quantum problem are briefly recalled.\\
Kac's chaos result for the sequence of N probability laws is
described in Section 4.\\
In Section 5 we discuss the concept of entropy chaos and its
relation with Kac's chaos. We prove that under an assumption on
the form of the trapping potential $V$  the entropy chaos holds
for our sequence of N probability laws. Moreover under the
hypothesis of convexity of the trapping potential $V$ we establish
an inequality of HWI type, i.e. between the relative entropy (H)
and the relative Fisher information (I) through the Wasserstein
distance (W), which also allows to prove the entropy chaos
property. Finally we show that the entropy chaos implies the total
variation convergence for the same sequence of probability
measures. In Section 6 we establish the weak convergence of the
one-particle interacting diffusion on the path space under the GP
scaling limit. With this result a probabilistic justification of
the GP mathematical model for the condensate is also provided.


\section{ Nelson-Carlen Diffusions and Bose-Einstein Condensation}

\vskip5pt

Nelson's Stochastic Mechanics is an alternative formulation of
Quantum Mechanics which allows to study quantum phenomena using a
well determined class of diffusion processes
(\cite{Nelson1},\cite{Nelson2},\cite{Carlen},\cite{Carlen2}). See
\cite{CarlenN} for a more recent review on Stochastic Mechanics.

We will briefly introduce the class of \textit{Nelson} diffusions
which are associated to a solution of a Schr\"{o}dinger equation.

Let the complex-valued function (\textit{wave function})
$\psi(x,t)$ be a solution of the equation:
\begin{equation}
i \partial_t \psi(x,t)=H\psi(x,t),\quad t\in {\R}, \quad x\in
{\R}^d,
\end{equation}
\noindent  with $\psi(x,0)=\psi_0(x)$, corresponding to the
Hamiltonian operator:
$$ H=-\frac{\hbar^2}{2m}\triangle +V(x),$$
\noindent where $\hbar$ denotes the reduced Planck constant, $m$
denotes the mass of a particle, and $V$ is some scalar potential.

Let us set:
\begin{equation}\label{osmvelocity}
u(x,t):=Re[\frac{\nabla\psi(x,t)}{\psi(x,t)}]
\end{equation}
\begin{equation}\label{curvelocity}
v(x,t):=Im[ \frac{\nabla\psi(x,t)}{\psi(x,t)}]
\end{equation}
\noindent when $\psi(x,t)\neq 0$ and, otherwise, set both $u(x,t)
$ and $v(x,t) $ to be  equal to zero.  Let us put
\begin{equation}
b(x,t):=u(x,t)+v(x,t)
\end{equation}

In a more general approach Carlen (\cite{Carlen3}) introduced the
 following diffusions class,  mainly characterized in terms of \textit{proper infinitesimal
 characteristics} $(\rho_t(x),v_t(x))$
consisting of a time-dependent probability density $\rho_t$ and a
time-dependent vector field $v_t(x)$ defined $\rho_t(x)dxdt-a.e$,
so constructed as to  have the time-reversal symmetry.

The  pairs $(\rho_t,v_t)$ are such that:
$$
\int_{\Real^d}f(x,T)\rho(x,T)dx-\int_{\Real^d}f(x,0)\rho(x,0)dx=\int_0^T\int_{\Real^d}(v_t\cdot\nabla
f)(x,t)dx$$ \noindent for all $T\geq 0$ and all $f\in
C_0^{\infty}(\Real^{d+1})$

Let $(\Omega, \mathcal F, \mathcal F_t,X_t) $, with
$\Omega=C(\Real_+,\Real^d)$, be the evaluation stochastic process
$X_t(\omega)=\omega(t)$, with $\mathcal F_t=\sigma(X_s, s\leq t)$
the natural filtration.

Carlen (\cite{Carlen},\cite{Carlen2},\cite{Carlen3}) proved that
if $(\rho_t,v_t)$ is a proper infinitesimal characteristic and if
the following \textit{finite energy condition} holds:
$$\int_0^T(||\nabla\sqrt{\rho_t}||^2_{L^2}+||v_t\sqrt{\rho_t}||^2_{L^2})dt
<+\infty,$$ \noindent with $||\cdot||_{L^2}$ the $L^2(\R^d\times
\R, dxdt)$-norm, for all $T\geq 0$ and all $f\in
C_0^{\infty}(\Real^{d+1})$, then there exists a unique Borel
probability measure $\mathbb P$ on $\Omega$ such that

i)$(\Omega, \mathcal F, \mathcal F_t,X_t,\mathbb P) $ is a Markov
process;

ii) the image of $\mathbb P$ under $X_t$ has density
$\rho(t,x):=\rho_t(x)$;

iii) $W_t:=X_t-X_0-\int_0^tb(X_s,s)ds$

\noindent is a $(\mathbb P,\mathcal F_t)$-Brownian Motion.

\vskip10pt In this Carlen diffusions class the Nelson diffusions
are properly those having the  pairs $(\rho_t,v_t)$ of the
following form:
$$\rho_t=\psi_t\bar{\psi_t}\quad \quad v_t=Im[ \frac{\nabla\psi_t}{\psi_t}]$$
\noindent ($\bar{\psi_t}$ being the conjugate complex function to
$\psi_t$). \vskip 10pt The continuity problem for the above
Nelson-Carlen map (from solutions of Schr\"{o}dinger equations to
probability measures on the path space given by the corresponding
Nelson-Carlen diffusions) is investigated in \cite{DPos}. For a
generalization to the case of Hamiltonian operators with magnetic
potential see \cite{PosU}.

From now on we will mainly consider the case where $d=3$.

We adopt the following notations: capital letters for stochastic
processes or, otherwise, we will explicitly specify them, $\hat
Y=(Y_1,...,Y_N)$ to denote arrays in $\R^{3N}$, $N\in {\mathbb
N}$, and bold letters for vectors in $\R^3$.

\vskip4pt

The Hamiltonian introduced to describe the recent experiments
(\cite{KeDr}, \cite{Cornell}) on BEC is the following N-body
Hamiltonian

\begin{equation} \label{HN}
H_N=\sum _{i=1}^{N}(-\frac {\hbar^2}{2m}{\triangle}_{i}+V({\bf
r_i}))+ \sum _{1\leq i< j \leq N} v(\bf r_i- \bf r_j)
\end{equation}
\noindent where $V$ is a confining potential,  $v$ a pair-wise
repulsive interaction potential and ${\bf r}_i \in \R^3,
i=1,...,N$. It operates on symmetric wave functions $\Psi$ in the
complex $L^2(\Real^{3N})$-space in order to satisfy the symmetry
permutation prescription for Bose particles.

We consider the mean quantum mechanical energy
\begin{equation}\label{quantenergy}
E[\Psi]=T_{\Psi}+ \Phi_{\Psi}
\end{equation}
where
$$T_{\Psi}=\sum _{i=1}^{N}\int_{\R^{3N}}|\nabla_i\Psi|^2d{\bf r}_1\cdot\cdot\cdot d{\bf r}_N$$
is physically called the \textit{kinetic energy} and
$$\Phi_{\Psi}= \sum _{i=1}^{N}\int_{\R^{3N}}V({\bf r}_i)|\Psi|^2d{\bf r}_1\cdot\cdot\cdot d{\bf r}_N
+\frac{1}{2}\sum _{i=2}^{N}\int v({\bf r}_1-{\bf
r}_i)|\Psi|^2d{\bf r}_1\cdot\cdot\cdot d{\bf r}_N$$ the
\textit{potential energy} associated with $\Psi$. The variational
problem associated to $H_N$ consists in minimizing $E [\Psi]$ with
respect to the complex-valued function $\Psi$ in $L^2(\R^{3N})$
subject to the constrain $\|\Psi\|_2=1$. If such a minimizing
function $\Psi^0_N$ exists it is called a \textit{ground state}.
The corresponding energy $E_0[\Psi^0_N]$ given by
$$E_0[\Psi^0_N]:=\inf\{E(\Psi): \int |\Psi|^2=1\}$$
is known as \textit{ground state energy}.

\noindent Under suitable assumptions on the potentials $V$ and $v$
one can prove the existence of the ground state $\Psi^0_N$ for
\eqref{HN}. Uniqueness of the ground state is to be understood as
uniqueness  apart from an \textit{overall phase}. For our purposes
we need a strictly positive and continuous differentiable ground
state. See \cite{ReedSimon} (Thm.XIII.46 and XIII.47) for the
regularities conditions on the potentials $V$ and $v$ implying the
strictly positivity and \cite{ReedSimon} (Thm.XIII.11) for those
implying the differentiability of the ground state wave function.

Here we precisely identify the interacting diffusions system
rigorously associated to the ground state solution $\Psi^0_N$ of
the Hamiltonian \eqref{HN}.

In this case, in fact, the pairs of proper infinitesimal
characteristics are of the form

$$(\rho_N,0) \quad \quad \rho_N:=|\Psi^0_N|^2\quad \quad \sqrt{\rho_N}\in
H_1(\Real^{3N})$$ with $H_1$ the Sobolev space of functions with
square integrable generalized derivatives.\\
Introducing the probability space $(\Omega^N, \mathcal F^N,
\mathcal F^N_t,\hat Y_t) $, with $\hat Y_t(\omega)=\omega(t)$ the
evaluation stochastic process, with $\mathcal F^N_t=\sigma(Y_s,
s\leq t)$ the natural filtration, then by Carlen's Theorem there
exists a unique Borel probability measure ${\mathbb P}_N$ such
that

i)$(\Omega^N, \mathcal F^N, \mathcal F^N_t,\hat Y_t,\mathbb P_N) $
is a Markov process;

ii) the image of $\mathbb P_N$ under $\hat Y_t$ has density
$\rho_N({\bf r})$;

iii) $\hat W_t:=\hat Y_t-\hat Y_0-\int_0^tb_N(\hat Y_s)ds$

\noindent where
$$b_N(\hat Y_t)=\frac
{\nabla^{(N)}\Psi^0_N}{\Psi^0_N}=\frac{1}{2}\frac
{\nabla^{(N)}\rho_N}{\rho_N}.$$

The stationary probability measure ${\mathbb P}_N$ with density
$\rho_N$ can be alternatively defined as the one associated to the
Dirichlet form (\cite{Fukushima},\cite{Fukushima2}, \cite{MR}):

\begin{equation}
\epsilon_{\rho_N}(f,g):=\frac{1}{2}\int_{\Real^{3N}}\nabla f({
r})\cdot\nabla g({ r})\rho_Nd{ r}^{3N}\quad \quad f,g \in
C_c^{\infty}(\Real^{3N})
\end{equation}

\vskip10pt

When Bose-Einstein condensation occurs, the condensate is usually
described by the order parameter $\phi_{GP}\in L^2(\R^3)$, also
called wave function of the condensate, which is the minimizer of
the Gross-Pitaevskii functional

\begin{equation} \label{GPfunc}
 E^{GP}[\phi] = \int  (\frac {\hbar^2}{2m}|\nabla \phi({\bf r})|^2 + V(r)|\phi({\bf r})|^2 +  g  |\phi({\bf r})|^4)d{\bf r}
 \end{equation}
 \noindent under the $L^2$-normalization condition
  $$ \int_{\R^3} |\phi^{GP}({\bf r})|^2d{\bf r}=1$$
  and where $g>0$ is a parameter depending on the interaction
  potential $v$ (see also assumption h3) in Section 3 below).
  Therefore $\phi_{GP}$ solves the stationary cubic non-linear equation (called Gross-Pitaevskii
equation)(\cite{Gross},\cite{Pitae})

 \begin{equation}\label{GP}
  -\frac {\hbar^2}{2m}\triangle \phi +V\phi + 2g |\phi|^2 \phi =\lambda
\phi
 \end{equation}
  \noindent $\lambda$, the real-valued Lagrange multiplier of the normalization constraint, is usually called  chemical potential. One can
prove that $\phi_{GP}$ is continuously differentiable and strictly
positive (\cite{Lieb}).

In \cite{LM} the stochastic quantization approach for the system
of $N$ interacting Bose particles has been exploited for the first
time.

It is proved in \cite{MU} that the Stochastic Mechanics of the
N-body problem associated to $H_N$ uniquely determines a well
defined stochastic process which describes the motion of the
single particle in the condensate, in the case of the
Gross-Pitaevskii scaling limit as introduced in \cite{Lieb}, which
allows to prove the existence of an exact Bose-Einstein
condensation for the ground state of $H^N$
(\cite{Lieb}\cite{Lieb2}). For the time-dependent derivation of
the Gross-Pitaevskii equation see \cite{Adami} and \cite{Erdos}.

\section{ Mean energy rescaling according to the GP limit }

\noindent For simplicity of notations, let us put $\hbar=2m=1$.

We consider the mean energy \eqref{quantenergy} expressed in terms
of the joint probability density of our $3N-$dimensional process
${\hat Y}$ as:
$$E_0[\rho_N]=E\{\sum_{i=1}^N[b_i^2(\hat Y)+V(Y_i(t))]+
\sum_{1\leq i< j\leq N} v(Y_i(t)-Y_j(t))\}$$ \noindent $b_i$ being
the drift of the interacting i-th particle, whose position is
given by the process $Y_i$. \vskip5pt

\noindent Following \cite{Lieb}, we assume

h1) $V(|\bf{r}_i|)$ is locally bounded, positive and going to
infinity when $|\bf{r}_i|$ goes to infinity.

h2) $v$ is smooth, compactly supported, non negative, spherically
symmetric, with finite positive \textit{scattering length} $a$
(\cite{LiebBook} Appendix C).

\noindent We perform the following scaling, known as
Gross-Pitaevskii (GP) scaling (\cite{Lieb}), writing
   \vskip 5mm

h3) $$ v(r)= v_1(\frac r a)/a^2 $$

$$
a=\frac{g}{4\pi N}
$$

\noindent where $v_{1}$ has scattering length equal to $1$ and
remains fixed while $N\uparrow +\infty$. Moreover $g
> 0$ as a consequence of our assumptions on $v$. \vskip 5pt

In \cite{Lieb} the following important theorem is proven.

\vskip 5mm

\begin{theorem}\label{theorem1} Under the
previous hypothesis h1),h2) h3) one has
\begin{equation} \label{LimE}
\lim_{N\rightarrow \infty}\frac{E_0[\rho_N]}{N}=E^{GP}[\rho_{GP}]
\end{equation}
and
\begin{equation} \label{Limrho}
\lim_{N\rightarrow \infty}\int \rho_Nd{\bf r}_2\cdot\cdot\cdot{\bf
r}_N =\rho_{GP}
\end{equation}
where $\rho_{GP}:=|\phi_{GP}|^2$, with $ \phi_{GP}$ the minimizer
of the Gross-Pitaevskii functional \eqref{GPfunc} and the
convergence is in the weak $L^1(\Real^3)$ sense.
\end{theorem}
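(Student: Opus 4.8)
The plan is to establish \eqref{LimE} by proving matching asymptotic upper and lower bounds for $E_0[\rho_N]/N$, and then to deduce \eqref{Limrho} from the energy convergence together with a compactness-and-uniqueness argument. Throughout, the decisive feature of the GP scaling h3) is that the scattering length $a = g/(4\pi N)$ tends to zero like $1/N$, so that each pair interaction concentrates on an ever shorter range while the effective coupling that survives in the limit is exactly $4\pi a N = g$. This is what produces the quartic term $g|\phi_{GP}|^4$ in $E^{GP}$ out of a purely two-body interaction.

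First I would treat the upper bound by a variational trial state of Jastrow-corrected product form,
\begin{equation}
\Psi_{\mathrm{trial}}({\bf r}_1,\dots,{\bf r}_N) = \prod_{i=1}^N \phi_{GP}({\bf r}_i) \prod_{i<j} f(|{\bf r}_i - {\bf r}_j|),
\end{equation}
where $f$ is built from the zero-energy two-body scattering solution of $v$, normalized to equal $1$ outside the (shrinking) range of $v$. Evaluating $E[\Psi_{\mathrm{trial}}]$, the one-body part reproduces $\int(|\nabla\phi_{GP}|^2 + V|\phi_{GP}|^2)$ per particle, while the pair kinetic energy carried by $f$ together with the interaction energy $v$ collapses, by the defining property of the scattering length, to $4\pi a \int|\phi_{GP}|^4$ per pair. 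Since there are $\sim N^2/2$ pairs and $a\sim 1/N$, this contributes $4\pi a N\int|\phi_{GP}|^4 = g\int|\phi_{GP}|^4$ per particle in the limit, giving $\limsup_N E_0[\rho_N]/N \le E^{GP}[\rho_{GP}]$.

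The lower bound is the genuinely hard part, and I expect it to be the main obstacle: one cannot treat $v$ perturbatively because it becomes singular and short-ranged as $N\to\infty$. The key device is Dyson's lemma, by which a controlled fraction of the kinetic energy is spent to replace each singular pair potential $v({\bf r}_i-{\bf r}_j)$ by a soft, longer-range potential of integral $4\pi a$, thereby extracting the scattering length without discarding the short-distance correlation energy. After this substitution one partitions $\R^3$ into small boxes, applies a Neumann-bracketing and generalized Poincar\'e estimate box by box, and uses a superadditivity argument together with the convexity of $\phi\mapsto|\phi|^4$ to reduce the many-body energy, density region by density region, to the one-body GP functional evaluated at the one-particle marginal. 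Optimizing over the box size yields $\liminf_N E_0[\rho_N]/N \ge E^{GP}[\rho_{GP}]$, which combined with the upper bound proves \eqref{LimE}.

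For \eqref{Limrho}, set $\rho_N^{(1)} := \int \rho_N\, d{\bf r}_2\cdots d{\bf r}_N$. The bound $\int V\rho_N^{(1)} \le E_0[\rho_N]/N \le C$, which follows from $V\ge 0$, $v\ge 0$ and the boundedness of $E_0[\rho_N]/N$ established above, combined with h1) (so that $V\to\infty$ at infinity), gives tightness of $\{\rho_N^{(1)}\}$ and hence relative weak-$L^1$ compactness. Along any weakly convergent subsequence $\rho_N^{(1)}\rightharpoonup \rho_\infty$, lower semicontinuity of the GP functional together with \eqref{LimE} forces $E^{GP}[\sqrt{\rho_\infty}]\le E^{GP}[\rho_{GP}]$; since $\phi_{GP}$ is the unique minimizer up to phase, this gives $\rho_\infty = \rho_{GP}$. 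As the limit is independent of the subsequence, the whole sequence $\rho_N^{(1)}$ converges weakly in $L^1(\R^3)$ to $\rho_{GP}$, which is \eqref{Limrho}.
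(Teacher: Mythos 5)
First, a point of comparison: the paper itself gives no proof of Theorem \ref{theorem1} --- it is quoted from \cite{Lieb} ("In \cite{Lieb} the following important theorem is proven"), so your attempt must be measured against the Lieb--Seiringer--Yngvason argument. Your treatment of \eqref{LimE} is a fair sketch of that argument: matching upper and lower bounds, a correlated trial state for the upper bound, Dyson's lemma plus a box decomposition for the lower bound. (A minor discrepancy: LSY use Dyson's sequential nearest-neighbour construction, $f(t_i)$ with $t_i=\min_{j<i}|{\bf r}_i-{\bf r}_j|$, rather than the full Jastrow product $\prod_{i<j}f(|{\bf r}_i-{\bf r}_j|)$, precisely because the Jastrow normalization and three-body terms are delicate; as a sketch either is acceptable.) The tightness part of your last paragraph, via $\int V\rho^{(1)}_N\le E_0[\rho_N]/N\le C$ and h1), is also fine.

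The genuine gap is in the identification of the limit in \eqref{Limrho}. You claim that lower semicontinuity plus \eqref{LimE} forces $E^{GP}[\sqrt{\rho_\infty}]\le E^{GP}[\rho_{GP}]$. But $E_0[\rho_N]/N$ is \emph{not} the GP functional evaluated at $\rho^{(1)}_N$, nor does it dominate it term by term. What lower semicontinuity actually yields, after the Hoffmann--Ostenhof inequality $\int|\nabla_1\Psi^0_N|^2\ge\int|\nabla\sqrt{\rho^{(1)}_N}|^2$ and after discarding the nonnegative interaction, is only
\[
E^{GP}[\rho_{GP}]\ \ge\ \int_{\R^3}|\nabla\sqrt{\rho_\infty}|^2\,d{\bf r}+\int_{\R^3} V\rho_\infty\,d{\bf r},
\]
i.e.\ the functional \emph{without} the quartic term $g\int\rho_\infty^2$. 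This inequality does not pin down $\rho_\infty$: for instance the ground-state density of $-\triangle+V$ satisfies it and differs from $\rho_{GP}$. Moreover the quartic term cannot be recovered by lower semicontinuity in principle: by the paper's Theorem \ref{theorem2} (equations \eqref{E1} and \eqref{E3}), the interaction energy per particle converges to $(1-\hat{s})g\int\rho_{GP}^2$, which is strictly smaller than $g\int\rho_{GP}^2$ whenever $\hat{s}>0$; the missing fraction $\hat{s}g\int\rho_{GP}^2$ resides in short-range two-body correlations, i.e.\ in kinetic energy that the one-particle marginal (and hence any lower-semicontinuity argument on $\rho^{(1)}_N$) cannot see. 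The proof in \cite{Lieb} circumvents this entirely by a variational perturbation: replace $V$ by $V+\delta W$ with $W$ bounded and continuous, observe that both the many-body ground-state energy per particle and the GP energy are concave in $\delta$ and converge pointwise (by the energy theorem applied to the perturbed potential), that the GP energy is differentiable at $\delta=0$ with derivative $\int W\rho_{GP}$ (here uniqueness of the GP minimizer enters), and conclude by Griffiths' lemma together with Feynman--Hellmann that $\int W\rho^{(1)}_N\to\int W\rho_{GP}$ for all such $W$, which is \eqref{Limrho}. Without this (or an equivalent device) your final step does not close.
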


\begin{remark} The one-particle marginal density $\rho^{(1)}_N$
converges weakly to $\rho_{GP}$ in the sense that the probability
measures $\rho^{(1)}_Nd{\bf r}$ weakly converge as $N\rightarrow
\infty$ towards the probability measure $\rho_{GP}d{\bf r}$ on
$\R^3$.
\end{remark}

Using some variational theorems (see \cite{CS}) the authors of
\cite{Lieb} are also able to uniquely characterize the limit of
the single components of the ground state energy $E_0[\rho_N]$.

\begin{theorem}\label{theorem2}(\textit{Energy Components}) Under the same hypothesis
h1),h2),h3) as in Theorem 1, let $\phi_0$ denote the solution of
the zero-energy \textit{scattering equation} for $v$:
\begin{equation}\label{scattering}
-\triangle \phi_0({\bf r})+{1\over 2} v({\bf r})\phi_0({\bf r})=0
\end{equation}
under the boundary condition $lim_{|{\bf r}|\rightarrow
+\infty}\phi_0({\bf r})=1$ \footnote{Setting $\phi_0({\bf
r})=\frac{u(r)}{r}$ with $r=|{\bf r}|$ the equation
\eqref{scattering} is equivalent to $-\triangle u( r)+{1\over 2}
v(r)u(r)=0$. The solution of this last equation with $u(0)=0$, for
r larger than the range of $v$, has the form: $u(r)=const(r-a)$
with $a> 0$  the scattering length of $v$. As a consequence
$\phi_0({\bf r})=const(1-\frac{a}{r})$. Thus for $r\uparrow
+\infty$  we have $lim_{|{\bf r}|\rightarrow +\infty}\phi_0({\bf
r})=1$.}. Putting
$$\hat{s}=\int |\nabla\phi_0|^2/(4\pi
a)$$ with $a$ as in h3), then $\hat{s}\in (0,1]$ and, recalling
that under the assumptions in \cite{Lieb} $|\nabla_1 \Psi^0_N({\bf
r}_1,...,{\bf r}_N)|^2$ exists and it is in $L^1(\R^{3N})$, we
have:
\begin{multline}\label{E1}
\lim _{N\uparrow \infty} \int_{\R^{3}} \int_{\R^{3N-3}} |\nabla_1
\sqrt{\rho_N}({\bf r}_1,...,{\bf r}_N)|^2d{\bf r}_1
\cdot\cdot\cdot d{\bf
r}_N=\int_{\R^3} |\nabla \sqrt{\rho_{GP}}({\bf r})|^2 d{\bf r}+ \\
+g\hat{s}\int_{\R^{3}}|\rho_{GP}({\bf r})|^2d{\bf r}
\end{multline}
and, moreover,
\begin{equation}\label{E2}
\lim _{N\uparrow \infty} \int_{\R^{3}} \int_{\R^{3N-3}}  V({\bf
r}_1)\rho_N({\bf r}_1,...,{\bf r}_N) d{\bf r}_1 \cdot\cdot\cdot
d{\bf r}_N= \int_{\R^{3}} V({\bf r})\rho_{GP}({\bf r}) d{\bf r}
\end{equation}

\begin{equation}\label{E3}
\lim _{N\uparrow \infty}{1\over2}\sum_{j=2}^{N} \int_{\R^{3}}
\int_{\R^{3N-3}}  v(|{\bf r}_1-{\bf r}_j|)\rho_N({\bf
r}_1,...,{\bf r}_N)d{\bf r}_1\cdot\cdot\cdot d{\bf r}_N=\\
(1-\hat{s})g\int_{\R^{3}} |\rho_{GP}({\bf r})|^2 d{\bf r}
\end{equation}
(with $g>0$ as in \eqref{GPfunc}).
\end{theorem}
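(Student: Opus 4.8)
The plan is to deduce the three component limits from the single total-energy limit \eqref{LimE} of Theorem \ref{theorem1} by a matching-liminf argument. Denote the left-hand sides of \eqref{E1}, \eqref{E2}, \eqref{E3} by $K_N$, $P_N$, $I_N$ and their right-hand sides by $K_\infty$, $P_\infty$, $I_\infty$; by the decomposition of the mean energy one has $K_N+P_N+I_N=E_0[\rho_N]/N$. First I would record the algebraic identity $K_\infty+P_\infty+I_\infty=E^{GP}[\rho_{GP}]$. Testing the scattering equation \eqref{scattering} against $\phi_0$ and integrating by parts over a ball of radius $R$, the boundary term generated by the asymptotics $\phi_0(\mathbf r)=1-a/|\mathbf r|+o(1/|\mathbf r|)$ tends to $4\pi a$, giving $\int|\nabla\phi_0|^2+\tfrac12\int v\,\phi_0^2=4\pi a$. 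With the definition $\hat s=\int|\nabla\phi_0|^2/(4\pi a)$ and the GP relation $4\pi a=g/N$ from h3), this splits as $\int|\nabla\phi_0|^2=g\hat s/N$ and $\tfrac12\int v\,\phi_0^2=g(1-\hat s)/N$. Since the kinetic target decomposes as $K_\infty=\int|\nabla\sqrt{\rho_{GP}}|^2+g\hat s\int\rho_{GP}^2$, adding $I_\infty=(1-\hat s)g\int\rho_{GP}^2$ collapses the $\hat s$-dependence into the single quartic term $g\int\rho_{GP}^2$ of \eqref{GPfunc}, and together with $P_\infty=\int V\rho_{GP}$ this reconstitutes $E^{GP}[\rho_{GP}]$. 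It therefore suffices to prove the three lower bounds $\liminf_N K_N\ge K_\infty$, $\liminf_N P_N\ge P_\infty$, $\liminf_N I_N\ge I_\infty$: since $K_N+P_N+I_N\to K_\infty+P_\infty+I_\infty$ by \eqref{LimE}, the standard fact that several sequences with prescribed liminf bounds whose sum converges to the sum of those bounds must each converge to its own bound forces every inequality to be an equality and every liminf a genuine limit.

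The trapping bound \eqref{E2} is the most elementary. By the Remark after Theorem \ref{theorem1} the one-particle marginal $\rho_N^{(1)}$ converges weakly to $\rho_{GP}$, so for each bounded truncation $V_M:=\min(V,M)$ one has $\int V_M\,\rho_N^{(1)}\,d\mathbf r\to\int V_M\,\rho_{GP}\,d\mathbf r$. Since $V_M\le V$, this yields $\liminf_N P_N\ge\int V_M\,\rho_{GP}\,d\mathbf r$ for every $M$, and letting $M\uparrow\infty$ by monotone convergence gives $\liminf_N P_N\ge P_\infty$. The confining hypothesis h1) together with the uniform bound on $E_0[\rho_N]/N$ provides the tightness that keeps the mass of $\rho_N^{(1)}$ from escaping to infinity, so no contribution is lost in the limit.

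The genuinely difficult part is the coupled pair \eqref{E1}--\eqref{E3}, in which the short-scale correlations of $\Psi^0_N$ redistribute the interaction energy $g\int\rho_{GP}^2$ between a kinetic fraction $\hat s$ and a potential fraction $1-\hat s$. Here I would follow the method of \cite{Lieb}: Dyson's lemma bounds from below the part of the one-particle kinetic energy $|\nabla_1\sqrt{\rho_N}|^2$ supported where $\mathbf r_1$ lies within the shrinking range of $v$ of some other $\mathbf r_j$ by an effective soft, longer-range potential whose integral against $\rho_N$ reproduces the scattering-length contribution, at the price of retaining only a $(1-\varepsilon)$ fraction of the low-momentum kinetic energy. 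A generalized Poincar\'e inequality absorbs the $\varepsilon$-loss, and the weak-$L^1$ convergence \eqref{Limrho} of the marginal then sends the surviving low-momentum kinetic energy to $\int|\nabla\sqrt{\rho_{GP}}|^2+g\hat s\int\rho_{GP}^2$ and the interaction energy, from below, to $(1-\hat s)g\int\rho_{GP}^2$; the exact value of the splitting is fixed by the scattering identity $\int|\nabla\phi_0|^2=g\hat s/N$ of the first paragraph.

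I expect the main obstacle to be precisely this simultaneous, quantitatively sharp extraction of the two fractions: the kinetic and interaction liminfs cannot be established in isolation, because any kinetic energy spent through Dyson's lemma to control the interaction is no longer available for the gradient term, so both bounds must be calibrated against one common partition of the energy dictated by $\phi_0$. Once \eqref{E1} and \eqref{E3} are secured with these constants, the matching-liminf argument of the first paragraph promotes all three statements to the asserted equalities.
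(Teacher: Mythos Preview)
The paper does not supply its own proof of this theorem: it is quoted as a result from \cite{Lieb}, with the remark that the component-by-component identification of the limits relies on ``variational theorems (see \cite{CS})''. There is therefore no proof in the paper against which to measure your argument; what one can do is ask whether your sketch reconstructs the reasoning of the cited sources.

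At the level of strategy your outline is sound and close in spirit to \cite{Lieb}. The algebraic identity $K_\infty+P_\infty+I_\infty=E^{GP}[\rho_{GP}]$ is just the tautology $g\hat s+g(1-\hat s)=g$; the scattering computation you carry out is not needed for that identity but does give, correctly, $\int|\nabla\phi_0|^2+\tfrac12\int v\phi_0^2=4\pi a$ and hence $\hat s\in(0,1]$. The trap term \eqref{E2} via truncation and weak convergence of $\rho_N^{(1)}$ is exactly right. The core step---Dyson's lemma to convert short-range kinetic energy near coincidences into an effective soft potential calibrated by $\phi_0$, followed by a matching of upper and lower bounds---is indeed the mechanism in \cite{Lieb} that produces the $\hat s$/$(1-\hat s)$ split.

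Where your write-up is thin is precisely the point you yourself flag: the two liminf bounds $\liminf K_N\ge K_\infty$ and $\liminf I_N\ge I_\infty$ are \emph{not} available independently, because Dyson's lemma trades one against the other. What the cited literature actually does is closer to a parametric/variational argument (this is the role of the reference \cite{CS} in the paper's attribution): one perturbs the weights of the kinetic and interaction terms and differentiates the resulting ground-state energy in the coupling, using convexity and Hellmann--Feynman-type reasoning to isolate each component. Your ``three separate liminfs plus sum-matching'' scheme, as stated, would need to be replaced or supplemented by that device to close the argument. So the plan is on the right track, but the passage from ``both bounds must be calibrated against one common partition'' to an actual proof is the missing step, and it is not a triviality.
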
 \vskip 10pt

\begin{remark} As remarked in \cite{MU}, the above assumptions on
$\Psi^0_N$ are satisfied under regularity assumptions on $V$ and
$v$ (\cite{ReedSimon} Sect. XIII 11).
\end{remark}
The next theorem states that the $L^2$-distance of the gradient
type drifts asymptotically goes to zero if we leave out the
neighborhoods of the points where the interaction is localized.
\vskip 10pt

\begin{theorem}\label{theorem3} (\textit{Energy Localization})
(\cite{Lieb2}). Defining

\begin{equation}
F^N({\bf r}_2 ,\dots ,{\bf r}_N):= (\bigcup _{i=2}^N  B ^N({\bf
r}_i))^c
\end{equation}

\noindent where $B^N({\bf r})$ denotes the open ball centered in
${\bf r}$ with radius $N^{-{1\over3}-\delta}$ where $0< \delta
\leq {4\over{51}}$,

\begin{equation} \lim _{N\uparrow \infty} \int_{\R^{3(N-1)}}d{\bf r}_2\cdot\cdot \cdot d{\bf r}_N
\int_{F^N({\bf r}_2 ,\dots ,{\bf r}_N)}(\frac{1}{2}\frac{\nabla_1
\rho_N}{\rho_N}-\frac{1}{2}\frac{\nabla_1 \rho_{GP}}{\phi_{GP}}
)^2\rho_{N}d{\bf r}_1=0
\end{equation}
\end{theorem}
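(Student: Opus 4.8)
The plan is to reduce the localized drift difference to a \emph{relative kinetic energy}, to evaluate the corresponding full-space integral through the energy components of Theorem \ref{theorem2}, and then to show that this energy is asymptotically carried entirely inside the interaction balls $(F^N)^c=\bigcup_{i=2}^N B^N(\mathbf{r}_i)$, so that nothing survives on $F^N$. I read the second drift in the integrand as the Gross-Pitaevskii drift $\beta_{GP}:=\nabla\log\phi_{GP}=\tfrac12\nabla_1\rho_{GP}/\rho_{GP}$. Setting $w_N:=\sqrt{\rho_N}/\phi_{GP}(\mathbf{r}_1)$, a direct computation using $\tfrac12(\nabla_1\rho_N/\rho_N)\sqrt{\rho_N}=\nabla_1\sqrt{\rho_N}$ gives
\[
\left(\tfrac12\tfrac{\nabla_1\rho_N}{\rho_N}-\beta_{GP}\right)\sqrt{\rho_N}=\phi_{GP}(\mathbf{r}_1)\,\nabla_1 w_N,\qquad\text{hence}\qquad \left(\tfrac12\tfrac{\nabla_1\rho_N}{\rho_N}-\beta_{GP}\right)^2\rho_N=\rho_{GP}(\mathbf{r}_1)\,|\nabla_1 w_N|^2 .
\]
Thus it suffices to prove that $\int_{\R^{3(N-1)}}d\mathbf{r}_2\cdots d\mathbf{r}_N\int_{F^N}\rho_{GP}(\mathbf{r}_1)\,|\nabla_1 w_N|^2\,d\mathbf{r}_1\to 0$.

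I would first compute the analogous integral over all of $\R^{3N}$ (no restriction to $F^N$), by expanding the square into three pieces. The diagonal term $\int|\nabla_1\sqrt{\rho_N}|^2$ is exactly the full first-particle kinetic energy and, by \eqref{E1}, tends to $\int|\nabla\sqrt{\rho_{GP}}|^2+g\hat{s}\int\rho_{GP}^2$. The term $\int|\beta_{GP}|^2\rho_N$ tends, by the weak marginal convergence of Theorem \ref{theorem1}, to $\int|\beta_{GP}|^2\rho_{GP}=\int|\nabla\sqrt{\rho_{GP}}|^2$. The cross term equals $-\int\nabla_1\rho_N\cdot\beta_{GP}$, and an integration by parts in $\mathbf{r}_1$ (with no boundary on $\R^{3N}$) turns it into $\int\rho_N\,\Delta\log\phi_{GP}$, which by Theorem \ref{theorem1} and a further integration by parts converges to $-2\int|\nabla\sqrt{\rho_{GP}}|^2$. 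Adding the three contributions, the three Gross-Pitaevskii kinetic terms cancel and one is left with
\[
\int_{\R^{3N}}\rho_{GP}(\mathbf{r}_1)\,|\nabla_1 w_N|^2\ \longrightarrow\ g\,\hat{s}\int_{\R^3}\rho_{GP}^2 .
\]
(The integrability needed for the marginal-convergence and integration-by-parts steps is supplied by the fast decay of $\phi_{GP}$ forced by the confining potential $V$ of h1), together with the strict positivity and differentiability of $\phi_{GP}$ from \cite{Lieb}.) The theorem is therefore precisely the assertion that this strictly positive limiting energy is asymptotically supported on the balls $(F^N)^c$, not on $F^N$.

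The localization is the heart of the matter and the main obstacle. Two structural facts drive it. First, since $v$ has range of order the scattering length $a=g/(4\pi N)\sim N^{-1}$, which is far below the ball radius $N^{-1/3-\delta}$, the region $F^N$ is \emph{collision-free}, and the only source of the excess energy $g\hat{s}\int\rho_{GP}^2$ is the short-scale correlation structure inherited from the zero-energy scattering solution $\phi_0$ of \eqref{scattering}. Second, near each scattering centre $w_N$ behaves, to leading order, like $\phi_0(\mathbf{r})=1-a/|\mathbf{r}|$, whose gradient decays like $a/|\mathbf{r}|^2$; hence the fraction of the scattering kinetic energy $\int|\nabla\phi_0|^2=4\pi a\hat{s}$ carried beyond radius $N^{-1/3-\delta}$ is of relative order $a\,N^{1/3+\delta}\sim N^{-2/3+\delta}$, which vanishes once $\delta<2/3$. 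This heuristic tail estimate already explains why the energy concentrates inside the balls.

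Turning this into a rigorous bound in the genuine $N$-body problem is the delicate step: one must justify that $w_N$ factorizes, up to controlled errors, into local two-body scattering profiles around the neighbouring particles, with the residual three- and many-body correlations negligible on $F^N$. This is exactly where the a priori energy bounds and the Dyson-type estimates of \cite{Lieb2} enter, and it is those error terms — rather than the crude tail bound above — that impose the sharper admissible range $0<\delta\le 4/51$. I would follow that analysis to control the error terms uniformly and conclude that $\int_{F^N}\rho_{GP}(\mathbf{r}_1)\,|\nabla_1 w_N|^2\to 0$, which is the assertion of the theorem.
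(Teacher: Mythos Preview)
The paper does not prove this theorem: it is quoted from Lieb--Seiringer \cite{Lieb2} as an input to the probabilistic analysis of Sections 4--7, so there is no ``paper's own proof'' to compare your proposal against.

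On the merits of your proposal: the rewriting via $w_N=\sqrt{\rho_N}/\phi_{GP}(\mathbf r_1)$ is correct and is indeed the natural object, and your computation that the \emph{full-space} integral of $\rho_{GP}|\nabla_1 w_N|^2$ tends to $g\hat s\int\rho_{GP}^2$ is formally right (although the cross-term limit needs more than weak marginal convergence, since $\Delta\log\phi_{GP}=V+2g\rho_{GP}-\lambda-|\beta_{GP}|^2$ is unbounded through $V$; you really need \eqref{E2} as well). But this full-space computation is not a reduction of the theorem --- it is its complement. You have shown that the integrals over $F^N$ and over $(F^N)^c$ together tend to $g\hat s\int\rho_{GP}^2$; the theorem asserts that the $F^N$-piece vanishes, i.e.\ that the balls carry the entire limit. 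Your step 4, which is precisely this assertion, is not argued: you give the right heuristic (scattering tails $\sim a/r^2$, hence relative leakage $\sim aN^{1/3+\delta}\to 0$) and then explicitly defer the rigorous control to ``the analysis of \cite{Lieb2}''. That analysis \emph{is} the proof of the theorem. In \cite{Lieb2} the localization is obtained directly from a Dyson-type lower bound and a priori estimates, \emph{prior} to any kinetic-energy splitting of the form \eqref{E1}; so invoking \eqref{E1} as an input and then appealing to \cite{Lieb2} for the localization is, at best, reversing the original argument and, at worst, circular. In short, steps 1--3 are a correct warm-up that identifies what must be localized, but the proposal contains no independent argument for the localization itself.
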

\vskip5pt

\vskip10pt
\section{Kac's chaos in Bose-Einstein Condensation}

In this section we put $E=\R^3$ and we consider the symmetric
probability law $G^N$ of our $N$ interacting diffusions on the
product space $E^N$.

We describe some results obtained in \cite{Ugolini} concerning the
asymptotic behavior of our $N$ interacting diffusions
$(Y_1,Y_2,...,Y_N)$. The fixed time joint probability density of
$(Y_1,\dots,Y_N)$ is given by $\rho_N:=|\Psi^0_N|^2$, which is
invariant under spatial permutations. In \cite{MU} it has been
proved that if $\Psi^0_N$ is the ground state of $H_N$ (as
described in section 2) and it is strictly positive and of class
$C^1$, then the three-dimensional processes $\{
Y_i\}_{i=1,\dots,N}$ are equal in law.

We recall the non trivial \textit{chaotic property} introduced by
\cite{Kac}. It properly formalizes the fact that the random
variables $(Y_1,Y_2,...,Y_N)$ are becoming asymptotically an
independent random vector. This is described in terms of the
asymptotic factorization of the corresponding symmetric
probability laws ${G}_N$. One can see this property as the
probabilistic counterpart of the {\it complete Bose-Einstein
Condensation} (for the latter result see \cite{Lieb2}). Here we
consider only the probabilistic setting.

\begin{definition}[\cite{Sznitman}]\label{definition1}({\bf Kac's
chaos})

We say that ${G}_{N}$ is $G$-\textit{Kac's chaotic} (or,
equivalently, is chaotic in the Boltzmann's sense) if
\begin{equation}
\forall n\geq 1,\quad {G}^{N}_{n} \rightharpoonup { G}^{\otimes
n}, \quad N\uparrow \infty
\end{equation}
where ${G}^{N}_{n}$ stands for the n-th marginal of ${G}_{N}$ and
the convergence is the weak convergence of probability measures in
the marginal space $E^n=\R^{3n}$.
\end{definition}

Following \cite{HaMi}, we can reformulate Kac's chaos using the
Monge-Kantorovich-Wasserstein (MKW) transportation distance or
Wasserstein distance of order $1$ between ${G}^{N}_{n}$ and the
n-fold tensor product ${G}^{\otimes n}$.\\

\begin{definition}\label{definition2}({\bf MKW distance})

Given a bounded distance $d_E$ on $E=\R^3$ we introduce the
normalized distance $d_{E^n}$ given by
\begin{equation}
\forall X=(x_1,\dots,x_n), Y=(y_1,\dots,y_n) \quad
d_{E^n}:=\frac{1}{n} \sum_{i=1}^{n}d_E(x_i,y_i),
\end{equation}
and we define the MKW distance $W_1$ by
\begin{equation}
W_1(\mu_1,\mu_2):=\inf_{\pi \in \Pi(\mu_1,\mu_2)} \int_{E\times E}
d_E(x,y)\pi (dx,dy)
\end{equation}
where $\Pi(\mu_1,\mu_2)$ is the set of probability measures with
first marginal $\mu_1$ and second marginal $\mu_2$.
\end{definition}

\begin{definition}\label{definition3}(\textbf{ Kac's chaos in terms of
$W_1$})

With the same notations as in Definition \ref{definition1},
${G}_{N}$ is $G$-Kac's \textit{chaotic} if, and only if,
\begin{equation}
\forall n\geq 1,\quad W_1({G}^{N}_{n}, {G}^{\otimes
n})\longrightarrow 0 \quad as \quad N\uparrow \infty
\end{equation}
\end{definition}

\noindent In \cite{Ugolini} the following has been established

\vskip5pt

\begin{theorem}\label{theorem4}({\bf Kac's chaos for the measure  ${G}_{N}$})
Under the hypothesis h1), h2), h3) (Section 3), the symmetric law
${G}_N$ is $G$-Kac's \textit{chaotic} (in the sense of Definition
\ref{definition1}).
\end{theorem}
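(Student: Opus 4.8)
The plan is to deduce Kac's chaos for $G_N$ directly from the complete (exact) Bose--Einstein condensation established by Lieb and Seiringer in \cite{Lieb2}, exploiting the fact that the fixed-time $n$-marginal density $\rho^{(n)}_N$ of $G_N$ is exactly the diagonal of the $n$-particle reduced density matrix $\gamma_N^{(n)}$ of the ground state $\Psi^0_N$ (normalized so that $\mathrm{Tr}\,\gamma_N^{(n)}=1$). Under the hypotheses h1), h2), h3) the analysis of \cite{Lieb2} yields not merely convergence of the one-particle reduced density matrix to the rank-one projection $|\phi_{GP}\rangle\langle\phi_{GP}|$, but the complete factorization of every $n$-particle reduced density matrix: for each fixed $n$,
\[
 \gamma_N^{(n)} \longrightarrow \Pi_n:=|\phi_{GP}^{\otimes n}\rangle\langle\phi_{GP}^{\otimes n}| \qquad \text{in trace norm.}
\]
This is the single analytic input I would import; the remainder of the argument is a transfer from operator convergence to weak convergence of probability measures.

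The key step is to test $\gamma_N^{(n)}$ against the multiplication operator $M_f$ associated with an arbitrary $f\in C_b(\R^{3n})$. Since $M_f$ is bounded with $\|M_f\|_{op}=\|f\|_\infty$, the elementary estimate $|\mathrm{Tr}(\gamma_N^{(n)}M_f)-\mathrm{Tr}(\Pi_n M_f)|\le \|f\|_\infty\,\|\gamma_N^{(n)}-\Pi_n\|_1$ combined with the trace-norm convergence above gives
\[
 \langle G^N_n,f\rangle=\int_{\R^{3n}} f\,\rho^{(n)}_N\,d\mathbf{r}=\mathrm{Tr}(\gamma_N^{(n)}M_f)\longrightarrow \mathrm{Tr}(\Pi_n M_f)=\int_{\R^{3n}} f\prod_{i=1}^n\rho_{GP}(\mathbf{r}_i)\,d\mathbf{r}=\langle G^{\otimes n},f\rangle,
\]
where I used that the diagonal of $\Pi_n$ is $\prod_i|\phi_{GP}(\mathbf{r}_i)|^2=\prod_i\rho_{GP}(\mathbf{r}_i)$. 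As $f$ ranges over $C_b(\R^{3n})$ this is precisely the weak convergence $G^N_n\rightharpoonup G^{\otimes n}$ demanded by Definition \ref{definition1}, for every $n\ge 1$. A shorter route, following \cite{HaMi}, is available: for symmetric laws $G$-chaos is equivalent to convergence of the two-particle marginal alone, so that, together with the one-marginal convergence \eqref{Limrho} of Theorem \ref{theorem1}, a variance computation for the empirical measure $\frac1N\sum_i\delta_{Y_i}$ propagates chaos to all orders once the case $n=2$ of the factorization is known.

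The main obstacle is exactly the passage from density-matrix convergence to convergence of the diagonal probability densities: trace-norm convergence of $\gamma_N^{(n)}$ controls expectations of bounded observables but says nothing a priori about the pointwise diagonal, and no regularity of the limiting density is assumed. The resolution is the observation used above, namely that multiplication by a bounded continuous function is a bounded operator, so that the diagonal is tested correctly through $\mathrm{Tr}(\gamma_N^{(n)}M_f)=\int f\,\rho^{(n)}_N$. A secondary point is to check that the hypotheses h1)--h3), which guarantee the strictly positive $C^1$ ground state $\Psi^0_N$ defining $G_N$, are exactly the conditions under which \cite{Lieb2} supplies the complete condensation input; verifying this compatibility closes the argument.
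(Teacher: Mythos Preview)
Your proposal is correct and follows the same overall strategy as the paper: the Kac's chaos for $G_N$ is imported from the complete Bose--Einstein condensation of Lieb and Seiringer \cite{Lieb2}, and the task reduces to transferring convergence of reduced density matrices to weak convergence of the marginal probability densities. The paper does not give a self-contained proof but refers to \cite{Ugolini}; the sketch at the end of Section~4 indicates that the two-marginal convergence \eqref{twomarginal} is obtained ``by reducing to the diagonal subspace in the convergence of the 4-particle reduced density matrices'', after which the classical Sznitman criterion (convergence of the second marginal suffices) yields Kac's chaos for all $n$.

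Your transfer step is slightly different and arguably cleaner: rather than passing through the $4$-particle object to recover the $2$-particle diagonal, you test $\gamma_N^{(n)}$ directly against the bounded multiplication operator $M_f$ and use $|\mathrm{Tr}((\gamma_N^{(n)}-\Pi_n)M_f)|\le\|f\|_\infty\|\gamma_N^{(n)}-\Pi_n\|_1$ together with the identity $\mathrm{Tr}(\gamma_N^{(n)}M_f)=\int f\,\rho_N^{(n)}$, valid for positive trace-class operators. This gives all $n$-marginals at once and makes the Sznitman reduction optional rather than essential. The paper's route, by contrast, isolates the case $n=2$ first and then invokes the propagation argument; this is natural in the BEC literature because the primary analytic input in \cite{Lieb2} is the one-body statement, from which higher $n$ follow by the standard rank-one projection argument. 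Both routes rest on exactly the same analytic input and are equivalent in strength.
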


With the usual notation
$$< \mu,\phi>=\int \phi(x)\mu(dx)$$
for a probability measure $\mu$ on $E$ and $\phi \in C_b(E)$, we
recall that there is another possible formulation of Kac's chaos
in terms of the empirical measures of our interacting diffusions
system.

\vskip5pt

Let us introduced the \textit{empirical measure} on $E$ associated
to an $E^N$-valued random vector ${\hat Y}=(Y_1,Y_2,...,Y_N)$:
$$\mu^N_{\hat Y}(d{\bf r}):=\frac{\sum_{i=1}^{N}\delta_{Y_i}(d{\bf r})}{
N}$$

First we recall the definition of chaos in terms of the empirical
measure.

\begin{definition}\label{definition4}({\bf Chaos by the empirical
measure})

We say that the exchangeable random vector ${\hat
Y}=(Y_1,\dots,Y_N)$ is $G$-chaotic if $\mu^N_{\hat Y}$ converges
in law to the constant (i.e. deterministic) random variable $G$.
\end{definition}

\noindent Following (\cite{Sznitman}) one can shown that the
\textit{chaotic} property in Theorem \ref{theorem2} implies a non
trivial convergence result for the \textit{empirical measure} (see
also \cite{Ugolini}).

\begin{proposition}\label{proposition1}
If Kac's chaos holds for the probability law $G_N$, then the
\textit{empirical measure} $\mu^N_{\hat Y}(d{\bf
r}):=\frac{\sum_{i=1}^{N}\delta_{Y_i}(d{\bf r})}{ N}$ converges in
law as $N\uparrow \infty$ to the constant random variable $G$. In
particular one has for $N\uparrow +\infty$ and $\forall \phi\in
C_b(E)$:
$$E_{\rho_N}[(<\mu^N_{\hat Y}-G,\phi>)^2]\rightarrow 0$$
\end{proposition}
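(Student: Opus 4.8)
The plan is to prove the quantitative $L^2$-estimate first, since the convergence in law of the empirical measures to the constant $G$ follows from it by a standard metrization argument; this is essentially the classical equivalence of Sznitman. Fix $\phi\in C_b(E)$ and abbreviate $\bar\phi:=<G,\phi>$. Since $<\mu^N_{\hat Y},\phi>=\frac1N\sum_{i=1}^N\phi(Y_i)$, I would start from
$$
E_{\rho_N}\big[(<\mu^N_{\hat Y}-G,\phi>)^2\big]=\frac1{N^2}\sum_{i,j=1}^N E_{\rho_N}\big[(\phi(Y_i)-\bar\phi)(\phi(Y_j)-\bar\phi)\big]
$$
and split the double sum into its diagonal ($i=j$) and off-diagonal ($i\neq j$) contributions.

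Both pieces are controlled by the permutation-invariance of $\rho_N$ (the law $G_N$ is symmetric, as recalled in Section 4) together with Kac's chaos. For the diagonal, each of the $N$ terms equals $E_{\rho_N}[(\phi(Y_1)-\bar\phi)^2]$, which is bounded because $\phi\in C_b(E)$; hence the diagonal contributes $O(1/N)$ and vanishes. For the off-diagonal, the $N(N-1)$ terms all equal $E_{\rho_N}[(\phi(Y_1)-\bar\phi)(\phi(Y_2)-\bar\phi)]$, and this is exactly where the chaos hypothesis enters: since $(\phi-\bar\phi)\otimes(\phi-\bar\phi)\in C_b(E^2)$, Definition \ref{definition1} with $n=2$ gives $G^N_2\rightharpoonup G^{\otimes2}$, so the cross term tends to $\big(\int(\phi-\bar\phi)\,dG\big)^2=0$. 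Multiplying by the prefactor $(N-1)/N\to1$ shows the off-diagonal part vanishes too, and the two estimates together yield the asserted $L^2$-convergence.

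It remains to upgrade this to convergence in law of $\mu^N_{\hat Y}$ toward the deterministic limit $G$. The estimate just proved shows that $<\mu^N_{\hat Y},\phi>\to<G,\phi>$ in $L^2(\rho_N)$, hence in probability, for every $\phi\in C_b(E)$. Since $E=\R^3$ is Polish, the weak topology on the space $\mathcal P(E)$ of probability measures is metrizable and admits a countable convergence-determining family $\{\phi_k\}_{k\geq1}$; applying the previous step to each $\phi_k$ and using that convergence in law to a constant is equivalent to convergence in probability, I conclude that $\mu^N_{\hat Y}\to G$ in law (Definition \ref{definition4}).

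Since the whole argument is the standard Sznitman equivalence, there is no genuine difficulty in it; the only step deserving care is the last one, where scalar-functional convergence is promoted to convergence in law of the random measures, and this rests precisely on the metrizability of the weak topology on $\mathcal P(E)$ and on the selection of a countable convergence-determining set.
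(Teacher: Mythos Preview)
Your proof is correct and is precisely the classical argument of Sznitman that the paper invokes: the paper does not actually give a proof of this proposition but simply refers to \cite{Sznitman} (and \cite{Ugolini}) for the well-known equivalence between Kac's chaos and empirical-measure chaos. Your diagonal/off-diagonal decomposition using only the $n=2$ marginal convergence, followed by the metrization step on $\mathcal P(E)$, is exactly that standard proof.
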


It is well-known that the chaotic property according to Definition
\ref{definition4} is really an equivalent formulation of Kac's
chaos as given in Definition \ref{definition1} (see
\cite{Sznitman}). For a detailed study of the quantitative
dependence between the cited different formulations of Kac's chaos
see \cite{HaMi}.

We conclude by stressing that the complete Bose-Einstein
Condensation proved in \cite{Lieb2} implies that  Kac's chaos
holds for the associated $N$ interacting diffusions system in the
GP scaling limit, under the same regularities conditions h1) h2)
on the potentials (\cite{Ugolini}).

The weak convergence of $\rho^{(1)}_Nd{\bf r}_1$ to
$\rho_{GP}d{\bf r}_1$ comes from Theorem \ref{theorem1}. It is
well-known that Kac's chaos is essentially (at least) given by
\begin{equation}\label{twomarginal}
\rho^{(2)}_N({\bf r}_1,{\bf r}_2)d{\bf r}_1d{\bf r}_2
\rightharpoonup \rho_{GP}({\bf r}_1)\rho_{GP}({\bf r}_2)d{\bf
r}_1d{\bf r}_2
\end{equation}
This result comes from the proof of the \textit{complete} BEC
given in \cite{Lieb2}. In particular it may be derived by reducing
to the diagonal subspace in the convergence of the 4-particle
reduced density matrices (see \cite{Ugolini}). The fact that
\eqref{twomarginal} is sufficient for having Kac's chaos is a
quite classical result (see \cite{Sznitman}, Proposition 2.2 and
\cite{HaMi}, Theorem 2.4). The counterpart of this property in the
standard analytical framework of BEC is proved in (\cite{Lieb2}
and \cite{LiebBook}, Remark after Theorem 7.1) and in
(\cite{Michelangeli}, Theorem 7.1.1). \vskip5pt

\section{Entropy chaos in Bose-Einstein Condensation}

In this section we describe the recent concept of entropy chaos,
first introduced in \cite{Carlen4} and then recently investigated
in \cite{HaMi}.

First we give the definition of entropy, in terms of a finite
moment condition, for probability  measures on the product space
$E^n$ (see \cite{HaMi}). The entropy, in fact, may not be
well-defined for probability measures decreasing two slowly at
infinity.

\begin{definition}\label{definition5}({\bf Entropy})
The entropy associated with a given measure ${G}_{N}$, admitting a
probability density $\rho_N\in L^1(E^N)$ such that $M_k(\rho_N) <
\infty$ for some $k>0$ ($M_k$ denoting the k-th moment), is given
by
\begin{equation}\label{definitionentropy}
\hat{H}(\rho_{N}):=\int_{E^N} \rho_{N} log({\rho_{N}})\\
=\int_{E^N} (\frac{\rho_{N}}{H_k}
log(\frac{\rho_{N}}{H_k})-\frac{\rho_N}{H_k}+1)H_k + \int_{E^N}
{\rho_{N}}log{H_k}
\end{equation}
with $H_k:=C_k\exp{(-|{\bf r}_1|^k-...-|{\bf r}_N|^k)}$, where
$C_k$ are normalization constants such that $H_k$ are probability
measures on $E^N$.
\end{definition}

Since $\rho_N$ and $H_k$ are probability measures on $E^N$, the
term on the right hand side is well-defined by the fact that the
first integral has a non negative integrand and the second one is
finite by the assumption of a finite k-th moment. Following
\cite{HaMi} we define the normalized entropy functional
\begin{equation}
H(\rho_N):=\frac{1}{N}\hat{H}(\rho_N)
\end{equation}
and we introduce the notion of entropy chaos.

\begin{definition}\label{definition6}({\bf Entropy chaos})
A sequence ${G}_{N}$ is $G$-entropy chaotic if
\begin{equation}\label{entropychaos}
{G}^{N}_1 \rightharpoonup G, \quad H({G}_{N}) \rightarrow H(G) <
+\infty,
\end{equation}
where $\rightharpoonup$ stands for weak convergence of measures,
as $N\rightarrow +\infty$.
\end{definition}

The next theorem (see \cite{HaMi}, Theorem 1.4) states that the
entropy chaos is a stronger property than Kac's chaos.

\begin{theorem}[\cite{HaMi}]\label{theorem5}
If a sequence ${G}_{N}$ is $G$-entropy chaotic, then ${G}_{N}$ is
$G$ Kac's chaotic.
\end{theorem}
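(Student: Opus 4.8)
The plan is to deduce Kac's chaos, i.e. the weak convergence $G^N_n \rightharpoonup G^{\otimes n}$ for every fixed $n$, from a quantitative control of the relative entropy of the marginals with respect to the product limit. Writing $\rho_G$ for the density of $G$ and $H(\mu\,|\,\nu):=\int \log\left(\frac{d\mu}{d\nu}\right)d\mu$ for the relative entropy, Pinsker's inequality gives $\norm{G^N_n - G^{\otimes n}}_{TV}^2 \leq \tfrac{1}{2} H(G^N_n \,|\, G^{\otimes n})$, so that total-variation (hence a fortiori weak) convergence of the $n$-marginals follows once I show $H(G^N_n \,|\, G^{\otimes n}) \to 0$ as $N \to \infty$ for each fixed $n$. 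The whole argument is therefore reduced to estimating these marginal relative entropies.

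First I would record the algebraic identity linking relative entropy, entropy and the limit density. Since $G^{\otimes N}$ has product density $\prod_{i=1}^N \rho_G({\bf r}_i)$ and $\rho_N$ is exchangeable, expanding the logarithm yields
\begin{equation*}
\frac{1}{N} H(G_N \,|\, G^{\otimes N}) = H(\rho_N) - \int_E \rho^{(1)}_N \log\rho_G\, d{\bf r},
\end{equation*}
where $H(\rho_N)=\tfrac{1}{N}\hat H(\rho_N)$ is the normalized entropy of Definition \ref{definition6} and $\rho^{(1)}_N$ the one-particle marginal density. Next I would prove the superadditivity of $n \mapsto H(G^N_n \,|\, G^{\otimes n})$: disintegrating $\rho^{(n+m)}_N$ over its first $n$ coordinates and using the convexity of relative entropy together with exchangeability gives $H(G^N_{n+m}\,|\,G^{\otimes(n+m)}) \geq H(G^N_n\,|\,G^{\otimes n}) + H(G^N_m\,|\,G^{\otimes m})$. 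A division-algorithm argument then produces, for each fixed $n$ and all $N>n$, the bound $H(G^N_n\,|\,G^{\otimes n}) \leq \tfrac{n}{N-n}\, H(G_N\,|\,G^{\otimes N})$.

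Combining these two steps reduces everything to showing $\tfrac{1}{N} H(G_N\,|\,G^{\otimes N}) \to 0$. By the entropy-chaos hypothesis the first term converges, $H(\rho_N)\to H(G)=\int_E\rho_G\log\rho_G\,d{\bf r}$, so it remains to pass to the limit in the cross term and obtain $\int_E \rho^{(1)}_N\log\rho_G\,d{\bf r} \to \int_E\rho_G\log\rho_G\,d{\bf r}$. Since $\rho^{(1)}_N \rightharpoonup \rho_G$ weakly (the first half of entropy chaos), this would close the circle, because the two limits cancel and the left-hand side of the identity is nonnegative.

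The main obstacle is precisely this last convergence. The function $\log\rho_G$ is continuous and bounded above but unbounded below, so weak convergence alone (via the Portmanteau theorem for upper-semicontinuous, bounded-above integrands) only yields $\limsup_N \int_E\rho^{(1)}_N\log\rho_G\,d{\bf r} \leq \int_E\rho_G\log\rho_G\,d{\bf r}$, i.e. the trivial inequality $\liminf_N \tfrac{1}{N} H(G_N\,|\,G^{\otimes N})\geq 0$. To obtain the reverse inequality I would need uniform integrability of the negative part of $\log\rho_G$ along the sequence $\rho^{(1)}_N$; since $\rho_G$ has Gaussian-type tails ($\log\rho_G$ behaving like $-|{\bf r}|^k$ at infinity), this amounts to a uniform moment bound $\sup_N M_k(\rho^{(1)}_N)<\infty$. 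This is exactly the finite-moment condition encoded in Definition \ref{definition5} (and, in the Bose-Einstein application, guaranteed by the growth of the trapping potential $V$): it upgrades the weak convergence of the first marginals to genuine convergence of the moment/cross term, forcing $\tfrac{1}{N} H(G_N\,|\,G^{\otimes N})\to 0$. With this in hand, the superadditivity bound gives $H(G^N_n\,|\,G^{\otimes n})\to 0$ for every fixed $n$, and Pinsker's inequality concludes.
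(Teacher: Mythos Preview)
Your quantitative route via relative entropy and Pinsker is genuinely different from the paper's, and it hits a real gap at precisely the step you flag as the ``main obstacle.'' You need $\int_E \rho^{(1)}_N \log \rho_G \to \int_E \rho_G \log \rho_G$, and you justify this by asserting that $\rho_G$ has Gaussian-type tails, so that a uniform $k$-th moment bound on $\rho^{(1)}_N$ yields uniform integrability of $(\log\rho_G)_-$. But nothing in the hypotheses of Theorem~\ref{theorem5} gives any pointwise lower bound on $\rho_G$: entropy chaos only says $H(G)<\infty$ together with weak convergence of the first marginal. The limit density could, for instance, decay like $\exp(-e^{|{\bf r}|})$; then $H(G)$ and all moments of $G$ are finite, yet $-\log\rho_G \sim e^{|{\bf r}|}$ is dominated by no polynomial, and as soon as $\rho^{(1)}_N$ has merely polynomial tails one gets $\int \rho^{(1)}_N \log \rho_G = -\infty$, i.e.\ $H(G_N\,|\,G^{\otimes N})=+\infty$ for every $N$. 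Your identity and the Pinsker step then never get off the ground. The finite-moment condition in Definition~\ref{definition5} constrains the \emph{sequence} $\rho^{(1)}_N$, not the tails of the \emph{limit} $\rho_G$, so it cannot close this gap.

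The paper's argument sidesteps the issue by never forming $H(G_N\,|\,G^{\otimes N})$. It fixes $n$, extracts by tightness a weak limit $F_n$ of a subsequence of $G^{N}_n$, and uses super-additivity together with lower semicontinuity of the \emph{plain} entropy $\hat H$ (both of which require only moment bounds on the sequence) to get $\tfrac{1}{n}\hat H(F_n)\leq\lim H(G_N)=H(G)$. Since the first marginal of $F_n$ is already $G$, super-additivity in the other direction gives $\tfrac{1}{n}\hat H(F_n)\geq \hat H(G)$, and equality forces $\hat H(F_n\,|\,G^{\otimes n})=\hat H(F_n)-n\hat H(G)=0$, hence $F_n=G^{\otimes n}$. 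The only cross term ever evaluated is $\int F_n\log(\rho_G^{\otimes n})=n\int\rho_G\log\rho_G$, which is finite by hypothesis precisely because $F_n$ \emph{already} has marginal $G$. Your approach would go through under the additional assumption $\rho_G\geq c\exp(-C|{\bf r}|^k)$, which may well hold for $\rho_{GP}$ in the Bose--Einstein application, but it is not part of the general statement being proved here.
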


\begin{proof}
We report a sketch of the proof for completeness ( for details see
\cite{HaMi}). By definition, the entropy-chaos means that
\eqref{entropychaos} holds. We  want to prove that for all $n\geq
1$ one has that ${G}^{N}_n \rightharpoonup G^{\otimes n}$. Fixing
$n\geq 1$, since $G^N_n$ is bounded in $E^n$, there exists a
subsequence $G^{N'}_n$ such that ${G}^{N'}_n \rightharpoonup F_n$
where $F_n$ is a probability measure on $E^n$. By the
super-additive property of the non normalized entropy $\hat{H}$
(defined according to Definition 5) and by taking the limit using
that $\hat{H}$ is lower semi-continuous and bounded by below one
has
\begin{equation}
\hat{H}(F_n)\leq liminf \hat{H}(G^{N'}_n)\leq liminf
\hat{H}(G^{N}_n)
\end{equation}
Dividing by $N$ and using the convergence of the entropies one
obtains
\begin{equation}
{H}(F_n)\leq liminf {H}(G^{N}_n)=H(G)
\end{equation}
Since the first marginal of $F_n$ is $F^1_n=G$, $G$ being the
limit of $G^N_1$ as $N\uparrow \infty$, and in general, by the
properties of the entropy, one has $H(F_n)\geq H(F^1_n)$ , then
one necessarily obtains $H(F_n)= H(G)$, which implies
$F_n=G^{\otimes n}$ a.e.. Since for our subsequence $G^{N'}_n$ we
have identified the limit $G^{\otimes n}$, we have proved that the
whole  sequence $G^{N}_n$ weakly converges to $G^{\otimes n}$ as
$N\uparrow \infty$.
\end{proof}

Taking advantage of some recent results in \cite{HaMi} we are able
to prove that the entropy-chaos holds for our sequence of
probability laws ${G}_{N}$ under some regularity assumptions on
the trapping potential $V$.

In order to achieve this, it is useful to introduce also the MKW
distance of order 2.

\begin{definition}\label{definition7}({\bf MKW distance of order
2})

The MKW distance of order 2, $W_2$, is defined as the MKW distance
of order 1 with the following choice for the normalized distance
$d_{E^n}$
\begin{equation}
\forall X=(x_1,...,x_n), Y=(y_1,...,y_n) \quad
d_{E^n}:=\frac{1}{n} \sum_{i=1}^{n}|x_i-y_i|^2,
\end{equation}
\end{definition}

Later we shall use the relation between $W_1$ and $W_2$ given in
the following proposition.

\begin{proposition}[\cite{HaMi}]\label{proposition2}
Given two symmetric probability measures $F^N$ and $G^N$ on $E^N$,
let us denote for any $k>0$
$$ {\mathcal M}_k:=M_k(F^N_1) + M_k(G^N_1)$$
where $M_k(f)$ denotes the k-th moment of the probability measure
$f$ on $E$.

One has $W_1(F^N,G^N)\leq W_2(F^N,G^N)$. For any $k > 2$, one has
\begin{equation}\label{Wrelation}
W_2(F^N,G^N)\leq 2^{\frac{3}{2}}{\mathcal M}_k^{1/k}
W_1(F^N,G^N)^{1/2-1/k}.
\end{equation}
\end{proposition}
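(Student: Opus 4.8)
The plan is to prove both inequalities by exhibiting, in each case, a single admissible transport plan whose cost can be compared pointwise, and then to control the higher-order cost by interpolation against the moments $\mathcal{M}_k$. Throughout I work with a competitor coupling $\pi \in \Pi(F^N,G^N)$ and use that $W_1$ and $W_2$ are infima, so any admissible $\pi$ yields an upper bound; by exchangeability the $x_i$- and $y_i$-marginals of $\pi$ are the one-particle marginals $F^N_1$ and $G^N_1$, which is exactly what makes the moment term $\mathcal{M}_k$ appear.

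For the first inequality $W_1(F^N,G^N)\le W_2(F^N,G^N)$, I would take $\pi$ to be (near-)optimal for $W_2$ and bound the $W_1$-cost of the \emph{same} plan. Since the single-particle distance is dominated by the Euclidean one, $d_E(x,y)\le|x-y|$, two applications of the power-mean (Cauchy--Schwarz) inequality suffice: first pointwise over the coordinate average, $\frac1N\sum_i|x_i-y_i|\le\big(\frac1N\sum_i|x_i-y_i|^2\big)^{1/2}$, and then in the integral against $\pi$, $\int(\cdot)\,d\pi\le\big(\int(\cdot)^2\,d\pi\big)^{1/2}$. Chaining these gives $W_1\le\int\frac1N\sum_i|x_i-y_i|\,d\pi\le\big(\int\frac1N\sum_i|x_i-y_i|^2\,d\pi\big)^{1/2}=W_2$.

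For the interpolation bound I would instead fix a (near-)optimal plan $\pi$ for $W_1$ and estimate $W_2^2\le\int\frac1N\sum_i|x_i-y_i|^2\,d\pi$ coordinate by coordinate. Writing $Z_i=|x_i-y_i|$ and taking the canonical bounded distance $d_E(x,y)=\min(|x-y|,1)$, I split each integral over $\{Z_i\le1\}$ and $\{Z_i>1\}$. On the head $Z_i^2\le Z_i=d_E(x_i,y_i)$, so this part is controlled by the $W_1$-cost. On the tail I apply H\"older with exponents $k/2$ and $k/(k-2)$, $\int_{Z_i>1}Z_i^2\,d\pi\le\big(\int Z_i^k\,d\pi\big)^{2/k}\pi(Z_i>1)^{1-2/k}$, and here the crucial point is that $d_E=1$ on $\{Z_i>1\}$, so $\pi(Z_i>1)$ is again bounded by the per-coordinate $W_1$-cost $a_i:=\int d_E(x_i,y_i)\,d\pi$ (a Markov-type estimate). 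Bounding $\int Z_i^k\,d\pi\le 2^{k-1}\int(|x_i|^k+|y_i|^k)\,d\pi$ and using exchangeability turns the $k$-th power term into $2^{k-1}\mathcal{M}_k$. A final H\"older over the coordinate average, with the same exponents, assembles $\frac1N\sum_i a_i^{1-2/k}(\cdots)^{2/k}\le W_1^{1-2/k}(2^{k-1}\mathcal{M}_k)^{2/k}$; taking square roots and absorbing the head term and the numerical factors yields $W_2\le 2^{3/2}\mathcal{M}_k^{1/k}W_1^{1/2-1/k}$.

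The main obstacle is the honest bookkeeping across the two nested layers of H\"older --- once in the integral against $\pi$ and once in the coordinate average $\frac1N\sum_i$ --- while keeping the normalization intact so that the tail produces \emph{exactly} the exponent $1/2-1/k$ and the single factor $\mathcal{M}_k^{1/k}$. The reason the denominator is $k$ rather than the sharper $k-1$ of the naive Lyapunov interpolation is precisely the use of the bounded distance: it is the identity $d_E=1$ on $\{Z_i>1\}$ that converts the tail probability into the $W_1$-cost, and this is the step to get right. Secondary technical points are the existence of (near-)optimal plans (Kantorovich duality), the requirement $k>2$ for the H\"older exponents to be admissible, and the finiteness $\mathcal{M}_k<\infty$ ensuring all the moment integrals converge.
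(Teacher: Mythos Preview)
The paper does not give its own proof of this proposition: it is quoted verbatim from \cite{HaMi} and used as a black box in the proof of Theorem~\ref{theorem6}. So there is no ``paper's proof'' to compare against; your argument stands on its own and is essentially the standard interpolation that Hauray--Mischler carry out.

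Your strategy is correct. The inequality $W_1\le W_2$ is exactly Jensen applied to the (near-)optimal $W_2$-plan, as you say. For the second inequality, fixing a (near-)optimal $W_1$-plan $\pi$, splitting each $\int Z_i^2\,d\pi$ at the truncation level of the bounded metric, bounding the head by $a_i=\int d_E(x_i,y_i)\,d\pi$ and the tail via H\"older against the $k$-th moment, and then using exchangeability to turn $\int(|x_i|^k+|y_i|^k)\,d\pi$ into $\mathcal{M}_k$, is precisely the right mechanism. The concavity step $\tfrac1N\sum_i a_i^{1-2/k}\le\big(\tfrac1N\sum_i a_i\big)^{1-2/k}=W_1^{1-2/k}$ is also fine.

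The one place where your write-up is too quick is ``absorbing the head term''. After the split you arrive at
\[
W_2^2 \;\le\; W_1 \;+\; 2^{2-2/k}\,\mathcal{M}_k^{2/k}\,W_1^{1-2/k},
\]
and you want to dominate the first summand by a constant times the second. That amounts to $W_1^{2/k}\lesssim \mathcal{M}_k^{2/k}$, which is \emph{not} automatic: $\mathcal{M}_k$ can be arbitrarily small while $W_1$ stays of order one (both measures concentrated near the origin but far from each other on the scale of $d_E$). To recover the stated constant $2^{3/2}$ you need a short complementary argument in that regime, e.g.\ the trivial product-coupling bound $W_2^2\le 2\mathcal{M}_2\le 2^{2-2/k}\mathcal{M}_k^{2/k}$ together with a case distinction on the size of $W_1$, or else you should accept a slightly worse absolute constant. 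This is a bookkeeping issue, not a structural one, and you have in fact already flagged it as the main obstacle; just be aware that ``absorbing'' is not a one-liner here.
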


Following \cite{HaMi}, we first introduce the \textit{normalized
versions} of the relative entropy and the Fisher information
between our measures.

\begin{definition}\label{definition8}({\bf (normalized) Relative
Entropy}) The normalized relative entropy between the measures
${G}_{N}$ and ${G}^{\otimes N}$ is given by
\begin{equation}\label{relativentropy}
H(\rho_{N}|\rho^{\otimes N}_{GP}):=\frac{1}{N}\int_{E^N} \rho_{N}
log(\frac{\rho_{N}}{\rho^{\otimes N}_{GP}})
\end{equation}
\end{definition}

\begin{remark} Differently from the entropy functional, the
relative entropy does not need any moment regularity assumption
because the integrand is a non negative function.
\end{remark}

\begin{definition}\label{definition9}({\bf Fisher
Information}) The Fisher information associated with a given
measure ${G}_{N}$ is given, when  ${G}_{N}\in W^{1,1}(E^N)$, by
\begin{equation}\label{Fischerinformation}
I(\rho_{N}):= \int_{E^N} \frac{|\nabla
\rho_{N}|^2}{\rho_N}=\int_{E^N} {|\nabla
\log{\rho_{N}}|^2}{\rho_N}
\end{equation}
\end{definition}
In our contest the hypothesis of finite energy condition (Section
2) implies that the Fisher Information is well-defined for all
$N$.
\begin{definition}\label{definition10}{\bf (normalized) Relative Fisher Information}
The normalized relative Fisher information between the measures
${G}_{N}$ and ${G}^{\otimes N}$ is given by
\begin{equation}\label{relativefischer}
I(\rho_{N}|\rho^{\otimes N}_{GP}):=\frac{1}{N} \int_{E^N} |\nabla
log\frac{\rho_{N}}{\rho^{\otimes N}_{GP}}|^2\rho_N
\end{equation}
\end{definition}

\begin{theorem}\label{theorem6}({\bf  ${G}_{N}$ is  $G$-entropy chaotic})

Under the hypothesis h1),h2),h3), and if the trapping potential
$V$ is such that $V({\bf r})\geq \alpha {\bf r}^{2+\epsilon}+
\beta$ with $\epsilon > 0$, where $\alpha$ and $\beta$ are
constants, the symmetric law ${G}_N$ is $G$-entropy
\textit{chaotic} according to Definition \ref{definition6}.
\end{theorem}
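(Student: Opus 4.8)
The plan is to verify directly the two conditions of Definition~\ref{definition6}: the weak convergence $G^N_1\rightharpoonup G=\rho_{GP}\,d{\bf r}$ and the convergence of normalized entropies $H(\rho_N)\to H(G)<+\infty$. The first is already granted, being the content of \eqref{Limrho} in Theorem~\ref{theorem1} (and of the Remark following it), and it is in any case implied by Kac's chaos, Theorem~\ref{theorem4}. Everything thus reduces to the entropy convergence, and the role of the hypothesis $V({\bf r})\geq\alpha|{\bf r}|^{2+\epsilon}+\beta$ is precisely to furnish, uniformly in $N$, the finite moment condition required in Definition~\ref{definition5}. Indeed, the kinetic and interaction parts of $E_0[\rho_N]$ are nonnegative, so \eqref{LimE} forces $\sup_N\int_{\R^3}V({\bf r})\rho^{(1)}_N\,d{\bf r}<+\infty$ and hence $\sup_N M_{2+\epsilon}(\rho^{(1)}_N)<+\infty$. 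This single bound makes $\hat H$ well defined and lower semicontinuous along the sequence, provides tightness, and controls the tails in the computations below.

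Next I would record that the normalized Fisher information stays bounded: by \eqref{E1}, $\tfrac1N I(\rho_N)=4\int_{\R^3}|\nabla_1\sqrt{\rho_N}|^2\,d{\bf r}\to 4(\int|\nabla\sqrt{\rho_{GP}}|^2+g\hat s\int\rho_{GP}^2)<+\infty$; this is the compactness ingredient, beyond Kac's chaos and the moment bound, on which the approach of \cite{HaMi} relies. The lower bound for the entropy is then the easy half: by the superadditivity of $\hat H$ for symmetric laws one has $\hat H(\rho_N)\geq N\hat H(\rho^{(1)}_N)$, i.e. $H(\rho_N)\geq\hat H(\rho^{(1)}_N)$, and combining this with lower semicontinuity and the moment bound yields $\liminf_N H(\rho_N)\geq\hat H(G)=H(G)$.

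The substance of the proof is the matching upper bound $\limsup_N H(\rho_N)\leq H(G)$. Following \cite{HaMi} I would write $H(\rho_N)=H(\rho_N|\rho^{\otimes N}_{GP})+\int_{\R^3}\rho^{(1)}_N\log\rho_{GP}\,d{\bf r}$. In the cross term $\log\rho_{GP}$ is bounded above, and, by the WKB decay of $\phi_{GP}$ under \eqref{GP}, $|\log\rho_{GP}({\bf r})|$ grows only like $|{\bf r}|^{2+\epsilon/2}$, hence is uniformly integrable against $\{\rho^{(1)}_N\}$ by the $(2+\epsilon)$-moment bound; together with $\rho^{(1)}_N\rightharpoonup\rho_{GP}$ this gives $\int\rho^{(1)}_N\log\rho_{GP}\to\int\rho_{GP}\log\rho_{GP}=H(G)$. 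It therefore remains to show that the normalized relative entropy \eqref{relativentropy} tends to $0$; being nonnegative, only $\limsup\leq 0$ is at issue, and this I would obtain from the quantitative estimates of \cite{HaMi} that control $H(\rho_N|\rho^{\otimes N}_{GP})$ by a modulus of the Kac--chaos distance $W_1(G^N_2,G^{\otimes 2})$ (Definition~\ref{definition3}, Proposition~\ref{proposition2}) with constants depending only on $\sup_N M_{2+\epsilon}(\rho^{(1)}_N)$ and $\sup_N\tfrac1N I(\rho_N)$; the Wasserstein distance vanishes by Theorem~\ref{theorem4}, while both constants are finite by the previous paragraph.

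I expect the vanishing of this relative entropy to be the main obstacle, and it is genuinely delicate, because the relative Fisher information does \emph{not} vanish: the same computation based on \eqref{E1} gives $I(\rho_N|\rho^{\otimes N}_{GP})\to 4g\hat s\int\rho_{GP}^2>0$, so no logarithmic Sobolev inequality for the product measure $\rho^{\otimes N}_{GP}$ can be used to squeeze the relative entropy to zero. The mechanism to be exploited is the one encoded in Theorem~\ref{theorem3}: the interaction-induced correlations live on the vanishing length scale $a\sim N^{-1}$ and on a set of asymptotically zero Lebesgue measure, so that, once the correct normalization of $\rho_N$ is accounted for, they contribute an order-one amount to the Fisher information but only an $o(1)$ amount to the entropy. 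Turning this heuristic into a verification that the hypotheses of the \cite{HaMi} estimate are genuinely met in the Gross--Pitaevskii scaling limit is where the real work lies.
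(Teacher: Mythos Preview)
Your ingredients are exactly the right ones---Kac's chaos (Theorem~\ref{theorem4}), the uniform $(2+\epsilon)$-moment bound coming from $V\geq\alpha|{\bf r}|^{2+\epsilon}+\beta$, and the boundedness of the normalized Fisher information from \eqref{E1}---and these are precisely what the paper uses. The difference, and the source of the obstacle you yourself flag, is the route you take to the upper bound.

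You decompose $H(\rho_N)=H(\rho_N|\rho^{\otimes N}_{GP})+\int\rho^{(1)}_N\log\rho_{GP}$ and then try to prove $H(\rho_N|\rho^{\otimes N}_{GP})\to 0$. This last step is, as you say, genuinely delicate; you correctly observe that the relative Fisher information does \emph{not} vanish, so no log-Sobolev argument applies, and your appeal to a \cite{HaMi} estimate controlling the \emph{relative} entropy by a Kac--chaos modulus is left at the level of a heuristic. The paper simply does not go through this decomposition. Instead, following \cite{HaMi} (Proposition~3.8), it applies the Otto--Villani HWI inequality with the Gaussian $g_\lambda$ as reference, then sends $\lambda\to\infty$ to obtain the clean bound on the \emph{difference of entropies}
\[
|H(\rho_N)-H(\rho^{\otimes N}_{GP})|\leq W_2(\rho_N,\rho^{\otimes N}_{GP})\bigl(\sqrt{I(\rho_N)}+\sqrt{I(\rho^{\otimes N}_{GP})}\bigr),
\]
which is \eqref{entropydistance}. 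Since $H(\rho^{\otimes N}_{GP})=H(\rho_{GP})=H(G)$ and both Fisher terms are bounded, it remains only to show $W_2\to 0$; Proposition~\ref{proposition2} converts this to $W_1^{1/2-1/k}$ at the sole cost of the uniform $M_{2+\epsilon}$ bound, and Kac's chaos finishes. Nowhere does one need $H(\rho_N|\rho^{\otimes N}_{GP})\to 0$; that stronger statement is established separately (Theorem~\ref{theorem7} and the Remark after it) only under the additional hypothesis that $V$ is convex, via the HWI inequality with $\rho^{\otimes N}_{GP}$ itself as the log-concave reference.

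A secondary issue: your cross-term argument asserts that $|\log\rho_{GP}({\bf r})|$ grows like $|{\bf r}|^{2+\epsilon/2}$, but the hypothesis on $V$ is only a \emph{lower} bound. If $V$ grows much faster, so does $|\log\rho_{GP}|$, and the $(2+\epsilon)$-moment alone need not yield the uniform integrability you claim. One could try to repair this via $\int V\rho^{(1)}_N$ being bounded (from \eqref{E2}) together with a pointwise comparison of $|\log\rho_{GP}|$ with $V$, but the paper's route makes the whole cross-term discussion unnecessary.
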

\begin{proof}
Following (\cite{HaMi}, Proposition 3.8), since $\rho_N$ and
$\rho_{GP}$ are probability densities in $\R^{3N}$ having finite
second moments, we can apply the HWI inequality by Otto-Villani
with respect to the Gaussian density
$g_{\lambda}(v)=\frac{1}{(2\pi
\lambda)^{3N/2}}\exp{(-|v|^2/2\lambda)}$ where $v=({\bf
r}_1,...,{\bf r}_N)$ (see \cite{OttoVillani}, Remarks after the
proof of Theorem 3):
\begin{equation}\label{WHI1}
\hat{H}(\rho_N|g_{\lambda})\leq
\hat{H}(\rho_{GP}|g_{\lambda})+W_2(\rho_N,\rho_{GP})\sqrt{\hat{I}(\rho_N|g_{\lambda})}
\end{equation}
Now
$$\hat{H}(\rho_N|g_{\lambda})=\hat{H}(\rho_N)-\int
\rho_Nlog(g_{\lambda})=\hat{H}(\rho_N)+\frac{3N}{2} log(2\pi
g_{\lambda})+\frac{M_2(\rho_N)}{2\lambda}$$ where $M_2$ denotes
the second moment, and, by definition of relative Fisher
information, we have
$$\hat{I}(\rho_N|g_{\lambda})=\int \rho_N|\nabla
log(\rho_N)+\frac{v}{\lambda}|^2=I(\rho_N)+\frac{2}{\lambda}\int
v\cdot \nabla \rho_N +\frac{M_2(\rho_N)}{\lambda^2}.$$ By
substituting these expressions into \eqref{WHI1}, simplifying the
terms containing  $log(g_{\lambda})$ and then sending $\lambda$ to
$+\infty$ and dividing the resulting limit by $N$ we obtain the
inequality:
$$H(\rho_N)\leq H(\rho^{\otimes N}_{GP})+W_2(\rho_N,\rho^{\otimes
N}_{GP})\sqrt{I(\rho_N)}.$$ By exchanging the two probability
measures $\rho_N$ and $\rho^{\otimes N}_{GP}$, we can recover in
the same way:
$$H(\rho^{\otimes N}_{GP})\leq H(\rho_N))+W_2(\rho_N,\rho^{\otimes
N}_{GP})\sqrt{I(\rho^{\otimes N}_{GP})}$$ and, finally, the
inequality
\begin{equation}\label{entropydistance}
|H(\rho_N)- H(\rho^{\otimes N}_{GP})|\leq W_2(\rho_N,\rho^{\otimes
N}_{GP})(\sqrt{I(\rho_N)}+ \sqrt{I(\rho^{\otimes N}_{GP})}).
\end{equation}
In order to take advantage of Proposition \ref{proposition2}, we
observe that we must require that the following moment
$$ {\mathcal M}_k:=M_k(\rho^{(1)}_N) + M_k(\rho_{GP})$$
is finite with $k=2+\epsilon$, for some $\epsilon > 0$. This
condition is implied by the condition on the confining potential
$V$ stated in the theorem.

By applying \eqref{Wrelation} to \eqref{entropydistance} we obtain
\begin{equation}\label{WHI2}
|H(\rho_N)- H(\rho^{\otimes N}_{GP})|\leq C
W_1(\rho_N,\rho^{\otimes N}_{GP})^{1/2-1/k}(\sqrt{I(\rho_N)}+
\sqrt{I(\rho^{\otimes N}_{GP})}).
\end{equation}
Since $I(\rho_N)$ is bounded for all $N$ and using the properties
of the Fisher information $I(\rho^{\otimes N}_{GP})=I(\rho_{GP})$
(\cite{HaMi}) and the fact that $I(\rho_{GP})$ is finite, and in
addition that the Kac's chaos holds (Theorem \ref{theorem4}) and
the fact that
 $\rho^{(1)}_N$ weakly converges to
$\rho_{GP}$ by Theorem \ref{theorem1}, we have that the entropy
chaos also holds (according to Definition \ref{definition6}).
\end{proof}

\begin{remark} Since under our assumptions $I(\rho_N)$ is bounded for
all $N$ and  $I(\rho_{GP})$ is finite, we note that by the
relevant inequality \eqref{entropydistance}  the entropy chaos
would follow from  the convergence to zero of the MKW distance of
order $2$ between the  densities $\rho_N$ and $\rho^{\otimes
N}_{GP}$. However, to prove this type of convergence between
probability measures is in general not a trivial problem (see,
e.g. \cite{Villani}, Chapter 6).
\end{remark}

We also establish a useful HWI type inequality  for the relative
entropy between ${G}_{N}$ and ${G}^{\otimes N}_{GP}$.

\begin{theorem}\label{theorem7}({\bf HWI inequality})
Under the hypothesis h1),h2), h3), and if the confining potential
$V$ is convex and $C^{\infty}$ one has
\begin{equation}\label{HWI}
H(\rho_N,\rho^{\otimes N}_{GP})\leq
\frac{1}{\sqrt{N}}W_2(\rho_N,\rho^{\otimes
N}_{GP})\sqrt{I(\rho_N,\rho^{\otimes N}_{GP})}
\end{equation}
\end{theorem}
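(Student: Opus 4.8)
The goal is to prove the HWI-type inequality \eqref{HWI} for the \emph{relative} entropy, under the added hypotheses that $V$ is convex and $C^{\infty}$. The plan is to mimic the strategy already used in the proof of Theorem \ref{theorem6}, but now apply the Otto--Villani HWI inequality directly with the reference measure $\rho^{\otimes N}_{GP}$ in place of the Gaussian $g_\lambda$, thereby avoiding the $\lambda \to +\infty$ limiting argument altogether. The crucial input that the convexity of $V$ buys us is displacement convexity of the relative entropy functional $\rho \mapsto \hat{H}(\rho\,|\,\rho^{\otimes N}_{GP})$ along Wasserstein geodesics. Indeed, the potential governing $\rho^{\otimes N}_{GP}$ is (up to additive constants) $\sum_i \big(V({\bf r}_i) + 2g|\phi_{GP}({\bf r}_i)|^2 - \lambda\big)$ via the Gross--Pitaevskii equation \eqref{GP}; when $V$ is convex this reference potential is convex, so the Bakry--\'Emery criterion ensures the measure $\rho^{\otimes N}_{GP}d{\bf r}$ is log-concave with a nonnegative generalized Hessian. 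In that $K=0$ regime the HWI inequality of Otto--Villani collapses to the clean form
\begin{equation}
\hat{H}(\rho_N\,|\,\rho^{\otimes N}_{GP}) \leq W_2(\rho_N,\rho^{\otimes N}_{GP})\sqrt{\hat{I}(\rho_N\,|\,\rho^{\otimes N}_{GP})},
\end{equation}
with no curvature-correction term on the right.

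First I would verify the regularity and integrability hypotheses needed to invoke Otto--Villani in the relative form: $\rho^{\otimes N}_{GP}d{\bf r}$ must be a log-concave probability measure with smooth density, which follows from the convexity and smoothness of $V$ together with the known strict positivity and $C^1$ (indeed smooth, by elliptic regularity from \eqref{GP} with $V\in C^\infty$) character of $\phi_{GP}$; and $\rho_N$ must have finite second moment and finite relative Fisher information $\hat{I}(\rho_N\,|\,\rho^{\otimes N}_{GP})$, both of which are guaranteed under h1)--h3) and the finite energy condition exactly as recorded before Theorem \ref{theorem6}. Second, I would write down the nonnormalized HWI inequality in the boxed form above. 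Third, I would divide through by $N$ and track how each of the three functionals rescales. By the definitions \eqref{relativentropy}, \eqref{relativefischer} and Definition \ref{definition7}, the relative entropy and relative Fisher information carry a factor $1/N$ in their normalized versions, i.e. $H(\rho_N,\rho^{\otimes N}_{GP}) = \tfrac{1}{N}\hat{H}(\rho_N\,|\,\rho^{\otimes N}_{GP})$ and likewise for $I$, while the $W_2$ appearing in Definition \ref{definition7} is already normalized by $1/n$ inside the cost. Reconciling these three different normalizations is what produces the asymmetric $1/\sqrt{N}$ prefactor in \eqref{HWI} rather than a symmetric one.

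The step I expect to be the main obstacle is precisely this bookkeeping of normalizations, because the three quantities are normalized inconsistently in the paper's own conventions. Concretely: dividing the boxed inequality by $N$ turns the left side into $H(\rho_N,\rho^{\otimes N}_{GP})$ exactly; on the right, $\tfrac{1}{N}\hat{I} = I(\rho_N,\rho^{\otimes N}_{GP})$ so $\tfrac{1}{N}\sqrt{\hat{I}} = \tfrac{1}{\sqrt{N}}\sqrt{I(\rho_N,\rho^{\otimes N}_{GP})}$, while the nonnormalized $W_2(\rho_N,\rho^{\otimes N}_{GP})$ built from $\sum_i|x_i-y_i|^2$ relates to the normalized $W_2$ of Definition \ref{definition7} by a further factor $\sqrt{N}$. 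I would have to chase these factors carefully: the $\sqrt{N}$ from the Wasserstein rescaling and the $1/\sqrt{N}$ from the Fisher information combine against the single remaining $1/\sqrt{N}$ to land exactly on the stated inequality \eqref{HWI}. The secondary technical point to check is that displacement convexity genuinely holds for the product reference measure on $\R^{3N}$; since the GP reference potential is a \emph{sum} of identical one-particle convex potentials it is convex on $\R^{3N}$, so the $N$-particle log-concavity is immediate and the $K=0$ HWI applies without a curvature penalty, which is exactly what makes the sharp prefactor-only bound \eqref{HWI} possible.
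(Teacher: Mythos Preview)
Your overall architecture matches the paper's proof exactly: apply the Otto--Villani HWI inequality (\cite{OttoVillani}, Theorem~3) in its $K=0$ form with reference measure $\rho_{GP}^{\otimes N}$, obtain the non-normalized inequality $\hat H\le W_2\sqrt{\hat I}$, and then divide by $N$ to get \eqref{HWI}. The normalization bookkeeping you worry about is in fact handled in the paper by a single sentence (``Dividing by $N$ we finally obtain \eqref{HWI}''), with $W_2$ there taken in the ordinary (non-normalized) sense; your more elaborate tracking of factors is not needed but is harmless.

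There is, however, a real gap in your justification of the log-concavity of $\rho_{GP}$, which is the heart of the matter. You write that ``the potential governing $\rho^{\otimes N}_{GP}$ is (up to additive constants) $\sum_i\big(V({\bf r}_i)+2g|\phi_{GP}({\bf r}_i)|^2-\lambda\big)$ via the Gross--Pitaevskii equation'', and then claim this is convex when $V$ is. Both steps fail. First, the GP equation \eqref{GP} says $(-\Delta+V+2g|\phi_{GP}|^2)\phi_{GP}=\lambda\phi_{GP}$; it does \emph{not} identify $-\log\phi_{GP}$ (or $-\log\rho_{GP}$) with the effective potential $V+2g|\phi_{GP}|^2-\lambda$. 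The ground state of a Schr\"odinger operator with potential $V_{\mathrm{eff}}$ is not $e^{-V_{\mathrm{eff}}}$ except in accidental cases. Second, even granting that misidentification, $|\phi_{GP}|^2$ is an integrable, decaying probability density and hence cannot be convex on $\R^3$; so convexity of $V$ alone would not force convexity of $V+2g|\phi_{GP}|^2$.

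The paper fills this gap by a genuinely different argument: it invokes the fact (from \cite{Lieb}) that $\phi_{GP}$ is log-concave, and reproduces the proof via a Trotter-type decomposition of the GP evolution into the heat flow, multiplication by $e^{-tV}$, and the cubic nonlinear flow $\partial_t u+8\pi u^2=\lambda u$, each of which preserves log-concavity (the first by convolution with a Gaussian, the second by convexity of $V$, the third by a result of Lions \cite{Lions}). From log-concavity of $\phi_{GP}$ one gets convexity of $W({\bf r}_1,\dots,{\bf r}_N)=-\sum_i\log\rho_{GP}({\bf r}_i)$, and then the Otto--Villani HWI applies with $K=0$ exactly as you intended. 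So your plan is the right one; what is missing is a correct proof that $-\log\rho_{GP}$ is convex, and the route through the effective Schr\"odinger potential does not provide it.
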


\begin{proof}
Let us introduce $W$ such that $d{G}^N= \exp{(-W)}d{\bf r}_1\cdot
\cdot \cdot d{\bf r}_N$ on $E^N$, i.e. $W({\bf r}_1,...,{\bf
r}_N)= -\sum_{i=1}^N log(\rho_{GP}({\bf r}_i))$. Since $V\in
C^{\infty}(E)$ we have that $\phi_{GP} \in C^{\infty}$ (see
\cite{Lieb2}), and, therefore, $W \in C^2(E^N)$. We show now that
$W$ is convex. From \cite{Lieb} we know that $\phi_{GP}$ is
log-concave. We report the  proof of this fact for completeness.
Let us consider the GP equation \eqref{GP}. If one is able to
prove that the solutions of the equations
\begin{equation}\label{logconcave}
\partial_tu - \nabla^2u=0,\quad \partial_tu+Vu=0, \quad
\partial_tu+8\pi u^2=\lambda u
\end{equation}
are log-concave when u(0,{\bf x}) is positive and log-concave,
then applying Trotter formula one has that the solution of
equation \eqref{GP} is log-concave. Since the convolution of two
log-concave functions is log-concave, the solution of the first
equation in \eqref{logconcave} is log-concave. The solution of the
second equation in \eqref{logconcave} is log-concave due to the
convexity of $V$. The log-concavity of the solution of the third
equation in \eqref{logconcave} is proved in (\cite{Lions}, Theorem
1). Since $W({\bf r}_1,...,{\bf r}_N)= -\sum_{i=1}^N
log(\rho_{GP}({\bf r}_i))$ and the sum of log-concave functions is
also log-concave, it follows that $W$ is convex in $E^N$.

Since $\rho_Nd{\bf r}_1\cdot \cdot \cdot d{\bf r}_N$ is absolutely
continuous with respect to $ \rho_{GP}d{\bf r}_1\cdot \cdot \cdot
d{\bf r}_N $, by (\cite{OttoVillani}, Theorem 3), we have that
\begin{equation}
\hat{H}(\rho_N,\rho^{\otimes N}_{GP})\leq W_2(\rho_N,\rho^{\otimes
N}_{GP})\sqrt{\hat{I}(\rho_N,\rho^{\otimes N}_{GP})}
\end{equation}
where $\hat{H}$ respectively $\hat{I}$ are the non-normalized
relative entropy and respectively relative Fisher information.
Dividing by N we finally obtain \eqref{HWI}.
\end{proof}

\begin{remark}
We can also prove using Theorem \ref{theorem7} that entropy chaos
holds. In fact under the assumptions on the confining potential
$V$ in Theorem \ref{theorem4} plus the assumption of convexity of
$V$, it is sufficient to substitute the inequality in Proposition
\ref{proposition2} into the HWI inequality \eqref{HWI} for
obtaining that $H(\rho_N,\rho^{\otimes N}_{GP})$ converges to zero
when $N$ goes to infinity.
\end{remark}

We have seen that the Kac's chaos property is linked with the weak
convergence of the marginal densities. Since under certain
regularity conditions on the trapping potential $V$ we have both
Kac's chaos and entropy chaos, it is natural to investigate
whether the entropy convergence implies a stronger convergence of
the marginal densities. The answer is affirmative.

Using some well-known results concerning the convergence of the
entropies and the strong $L^1-$ convergence of the probability
densities (see, e.g., \cite{BoLe}), in our case we obtain the
following.

\begin{theorem}\label{theorem8}
Let ${G}^N_1$ and $G$ as above. If the entropy-chaos is satisfied
(in the sense of Definition \ref{definition6}), then ${G}^N_1$
converges to $G$ in total variation.
\end{theorem}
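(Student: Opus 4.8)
The plan is to deduce total-variation (equivalently $L^1$) convergence of the one-particle marginals from the two facts packaged in the entropy-chaos hypothesis: the weak convergence $G^N_1 \rightharpoonup G$ and the convergence of the normalized $N$-particle entropy $H(\rho_N)=\frac{1}{N}\hat{H}(\rho_N)\to H(G)=\hat{H}(\rho_{GP})$ (here $G=\rho_{GP}\,d\mathbf{r}$, so its one-particle entropy coincides with $\hat{H}(\rho_{GP})$). The crucial intermediate goal is to upgrade this to convergence of the \emph{genuine} one-particle entropy $\hat{H}(\rho^{(1)}_N)\to\hat{H}(\rho_{GP})$; once this is available, the classical principle that weak convergence together with convergence of the entropies forces strong $L^1$ convergence (see, e.g., \cite{BoLe}) yields the claim.

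First I would establish the one-particle entropy convergence by a sandwich argument. For the upper bound, the superadditivity of $\hat{H}$ (the same property invoked in the proof of Theorem \ref{theorem5}, which follows from the nonnegativity of the relative entropy of $\rho_N$ with respect to the product of its one-particle marginals together with the symmetry of $\rho_N$) gives
$$\hat{H}(\rho_N)\geq N\,\hat{H}(\rho^{(1)}_N),$$
hence $\hat{H}(\rho^{(1)}_N)\leq H(\rho_N)$ and therefore $\limsup_N \hat{H}(\rho^{(1)}_N)\leq H(G)=\hat{H}(\rho_{GP})$. For the matching lower bound I would invoke the lower semicontinuity of $\hat{H}$ along weakly convergent sequences: since $G^N_1\rightharpoonup G$, one has $\hat{H}(\rho_{GP})\leq \liminf_N \hat{H}(\rho^{(1)}_N)$. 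Combining the two inequalities yields $\hat{H}(\rho^{(1)}_N)\to\hat{H}(\rho_{GP})$. It is exactly here that the moment hypothesis on the trapping potential $V$ (as in Theorem \ref{theorem6}) enters: it provides a uniform bound $\mathcal{M}_k<\infty$ on the $k$-th moments of the $\rho^{(1)}_N$ for some $k>2$, which is what makes $\hat{H}$ well-defined (Definition \ref{definition5}) and lower semicontinuous on this class.

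Next I would pass to the relative entropy, the most transparent route to total variation. Writing
$$H(\rho^{(1)}_N\,|\,\rho_{GP})=\hat{H}(\rho^{(1)}_N)-\int_{E}\rho^{(1)}_N\log\rho_{GP},$$
the first term tends to $\hat{H}(\rho_{GP})$ by the previous step, while the cross term tends to $\int_E \rho_{GP}\log\rho_{GP}=\hat{H}(\rho_{GP})$ by weak convergence: the confining assumption $V(\mathbf{r})\geq \alpha\,\mathbf{r}^{2+\epsilon}+\beta$ forces $-\log\rho_{GP}$ to grow at most polynomially, so the uniform moment bound supplies the uniform integrability needed to test $G^N_1\rightharpoonup G$ against the unbounded function $\log\rho_{GP}$. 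Hence $H(\rho^{(1)}_N\,|\,\rho_{GP})\to 0$, and the Csisz\'ar--Kullback--Pinsker inequality
$$\|\rho^{(1)}_N-\rho_{GP}\|_{L^1}^2\leq 2\,H(\rho^{(1)}_N\,|\,\rho_{GP})$$
delivers the total-variation convergence of $G^N_1$ to $G$.

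I expect the main obstacle to be the one-particle entropy convergence step, and specifically the two competing demands it places on moment control. The lower-semicontinuity half is delicate because the integrand $\rho\log\rho$ is neither bounded nor of one sign, so weak lower semicontinuity must be applied in the form valid under a uniform moment bound (the decomposition of Definition \ref{definition5} against the reference density $H_k$), and one must verify that the marginals $\rho^{(1)}_N$ genuinely inherit such a bound from the hypothesis on $V$. The cross-term convergence requires the same bound to dominate the unbounded test function $\log\rho_{GP}$; checking that $\rho_{GP}$ has the expected fast (Gaussian-type) lower tail, so that $\log\rho_{GP}$ is controlled by the available moments, is the remaining technical point. Once these moment estimates are secured, both the sandwich and the Pinsker step are routine.
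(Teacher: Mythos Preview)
Your proposal is correct and close in spirit to the paper's, with two differences worth recording. First, the sandwich argument you give for $\hat{H}(\rho^{(1)}_N)\to\hat{H}(\rho_{GP})$ (superadditivity of $\hat H$ for the upper bound, weak lower semicontinuity under the uniform moment control for the lower) is a step the paper essentially elides: it writes ``by the convergence of the entropies'' as though $H(G_N)\to H(G)$ already delivered $\hat{H}(\rho^{(1)}_N)\to\hat{H}(\rho_{GP})$, and your argument supplies precisely the missing link. Second, the paper uses a different reference measure for the final step: rather than decomposing against $\rho_{GP}$, showing $H(\rho^{(1)}_N\,|\,\rho_{GP})\to 0$, and applying Csisz\'ar--Kullback--Pinsker, it decomposes against a fixed Gaussian $\gamma$, obtains $H(\rho^{(1)}_N\,|\,\gamma)\to H(\rho_{GP}\,|\,\gamma)$ (a finite, nonzero limit), and then invokes the Borwein--Lewis lemma \cite{BoLe} that on a finite measure space weak convergence of densities together with convergence of their relative entropies forces strong $L^1$ convergence. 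Your route is more self-contained in its conclusion (Pinsker is textbook) but loads the difficulty onto testing weak convergence against the unbounded function $\log\rho_{GP}$, which requires a quantitative lower bound on $\rho_{GP}$ matched to the available moments --- exactly the technical point you flagged, and one whose verification depends on decay estimates for the GP minimizer. The paper's route needs only second-moment convergence (since $\log\gamma$ is quadratic), which follows directly from the uniform $(2+\epsilon)$-moment bound, at the price of importing a less standard lemma.
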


\begin{proof}
We first prove that from the convergence of the entropies in
\eqref{entropychaos} it follows that

\begin{equation}\label{entropyconvergence}
H({\rho}^{(1)}_{N}|\gamma) \rightarrow H(\rho_{GP}|\gamma) <
+\infty
\end{equation}
where $\gamma({\bf r})=\frac{1}{(2\pi \lambda)^{3N/2}}\exp{(-|{\bf
r}|^2/2\lambda)}, {\bf r}\in \R^3, \lambda>0$. In fact
\begin{equation}
H({\rho}^{(1)}_{N})=H({\rho}^{(1)}_{N}|\gamma) +\int
{\rho}^{(1)}_{N}\log{\gamma}
\end{equation}
and for $N\uparrow \infty$
\begin{equation}
\int {\rho}^{(1)}_{N}\log{\gamma}\rightarrow \int
{\rho}_{GP}\log{\gamma}
\end{equation}
By the convergence of the entropies from this
\eqref{entropyconvergence} follows. Now from a well-known result (
see, e.g. \cite{BoLe}, Lemma 2.5), which is true only on finite
measure spaces ( see \cite{DeVecchi} for some counterexamples) ,
when \eqref{entropyconvergence} holds and moreover
${\rho}^{(1)}_{N}d{\bf r} \rightharpoonup \rho_{GP}d{\bf r}$, then
as ${N\uparrow \infty}$
\begin{equation}
\int |\frac{{\rho}^{(1)}_{N}}{\gamma} -
\frac{\rho_{GP}}{\gamma}|\gamma d{\bf r}\rightarrow 0
\end{equation}
From the latter result it follows that ${\rho}^{(1)}_{N}$
converges to $\rho_{GP}$ strongly in $L^1(d{\bf r})$.

Since by using Scheffe's theorem, one can easily prove that in
$\R^d$ the total variation distance between  two absolutely
continuous measures is equal to $\frac{1}{2}$ the $L^1-$ distance
between the two probability densities \footnote{In fact if
${\mathbb{Q}}_1$ and $\mathbb{Q}_2$ are two absolutely continuous
measures  in $\R^d$ with densities $f_1$ and $f_2$ respectively ,
then $d_{VT}(\mathbb{Q}_1,\mathbb{Q}_2):=\sup_{A\in {\mathcal
B}(\R^d)}|{{\mathbb{Q}}}_1(A)-{\mathbb{Q}}_2(A)|=\sup_{A\in
{\mathcal B}(\R^d)}|\int_{A}f_1({\bf r})d{\bf r}-\int_{A}f_2({\bf
r})d{\bf r}|=\int_{f_1>f_2} (f_1({\bf r})-f_2({\bf r}))d{\bf
r}=\int_{f_2>f_1} (f_2({\bf r})-f_1({\bf r})){\bf
r}=\frac{1}{2}\int|f_1({\bf r})-f_2({\bf r})|d{\bf r}$} (see, e.g.
\cite{DeLu}), we can state that our one-particle marginal
probability measure $G^N_1$ converges to $G$ in the sense of total
variation convergence of probability measures.
\end{proof}

\section{Existence of a weakly convergent subsequence}

In the previous Section 4 and Section 5 we have presented some
convergence results for the fixed time marginal density
$\rho_{N}$.

In the present section we focus on a convergence problem for the
one particle probability measure ${\mathbb P}^N_1$ on the path
space. First the results contained in \cite{MU},\cite{MU1} and
\cite{DeVU} are briefly recalled in order to properly investigate
the asymptotic behavior of the {\it one particle relative
entropy}.

We consider the measurable space $(\Omega^N,\mathcal F ^N)$  where
$\Omega$ is $C(\R^+ \rightarrow \R^{3})$, $N\in {\mathbb N}$ and
$\mathcal F $ is its Borel sigma-algebra as introduced in Section
2. We denote by $\hat Y := (Y_1,\dots,Y_N)$ the coordinate process
and by $\mathcal F ^N_t$ the natural filtration.

Let us  introduce a process $X^{GP}$ with invariant density
$\rho_{GP}$ and try to compare it with the generic interacting
one-particle \textit{non} markovian diffusion $Y_1(t)$.

We assume that $X^{GP}$ is a weak solution of the SDE
\begin{equation} \label{SDE}
dX^{GP}_t:= u_{GP}(X^{GP}_t)dt + (\frac {\hbar}{m})^{\frac 1
2}dW_t
\end{equation}

\noindent where,
$$
u_{GP} := \frac {1}{2}\frac {\nabla \rho_{GP}}{\rho_{GP}}
$$

We denote again by ${\mathbb P}_N$ respectively ${\mathbb
P}^N_{GP}$ the measures corresponding to the weak solutions of the
$3N$- dimensional stochastic differential equations

\begin{equation} \label {SDEa}
\hat Y_t-\hat Y_0 = \int_0^t \hat b ^N (\hat Y_s)ds +\hat W_t
\end{equation}
respectively
\begin{equation} \label {SDEb}
\hat Y_t-\hat Y_0 = \int_0^t \hat u_{GP} (\hat Y_s)ds +\hat W_t',
\end{equation}

\noindent where
$$
\hat u_{GP}({\bf r}_1,\cdots,{\bf r}_N)=(u_{GP}({\bf
r}_1),\cdots,u_{GP}({\bf r}_N)),
$$
\noindent $ \hat Y_0$  is a random variable with probability
density equal to $ \rho_N$, while  $\hat W_t$ and $\hat W_t'$ are
$3N$-dimensional $\mathbb P_N$ and $\mathbb P^N_{GP}$ standard
Brownian motions, respectively.

In this section we use the shorthand notation $ \hat {b}^N_s=:\hat
{b}^N(\hat {Y}_s) $ and $ \hat {u}^N_s=:\hat {u}_{GP}(\hat {Y}_s)
$.

Following \cite{MU} we compute the relative entropy between the
three-dimensional {\it one-particle} non markovian diffusion $Y_1$
and $X^{GP}$.

In order to use Girsanov Theorem, we will assume that $u_{GP}$ is
bounded. Then the following finite energy conditions hold:

\begin{equation}\label {eni}
 E_{\mathbb P_N} \int_0^t \|\hat b ^N_s \|  ^2 ds  < \infty
\end{equation}

 \begin{equation}\label{enii}
 E_{\mathbb P_N} \int_0^t\| \ \hat u^{GP}_s  \| ^2 ds   < \infty ,
 \end{equation}

 \noindent which follow from the fact that $\Psi^0_N$ is the minimizer of
$E^N[\Psi]$, and our hypothesis on $u_{GP}$. It is well known that
these are also \textit{finite entropy conditions} (see, e.g.
\cite{Follmer}) which imply that $\forall t\geq 0$

$$\mathbb P_N|_{\mathcal F_t} \ll \hat W|_{\mathcal F_t}, \quad \mathbb P^N_{GP}|_{\mathcal F_t}\ll \hat W'|_{\mathcal F_t}  $$
(where $\ll$ stands for absolute continuity)
 Then, by Girsanov's theorem, we have, for all $t>0$,
\begin{equation}\label{derivative}
 \frac {d\mathbb P_N}{d\mathbb P^N_{GP}}|_{\mathcal F_t}=
  \exp \{-\int_0^{t}  (\hat b ^N _s-
 \hat u^{GP}_s)\cdot
 d \hat W_s+\frac{1}{2} \int_0^ {t}  \|\hat b ^N_s - \hat u^{GP}_s\|^2ds
\},
 \end{equation}

\noindent  where $|.|$ denotes the Euclidean norm in $\R^{3N}$.
The relative entropy restricted to $\mathcal F_t$ reads

 \begin{multline} \label{eq: Entropy}
 \mathcal H(\mathbb P_N,\mathbb P^N_{GP}) |_{\mathcal F_{t}}
 =: \mathbb E_{\mathbb P_N}[
 \log \frac {d\mathbb P_N}{d\mathbb P^N_{GP}}\mid_{\mathcal F_t}]=
 \frac {1}{2}E_{\mathbb P_N}\int_0^{t}  \|\hat b ^N_s - \hat u^{GP}_s\|^2 ds
 \end{multline}

  Since under $\mathbb P_N$ the $3N$-dimensional process $\hat Y$ is a solution of \eqref {SDEa}
  with invariant probability density $ \rho_N$ , we can write, recalling also \eqref{eni} and \eqref{enii},

 \begin{multline}
 \frac {1}{2}E_{\mathbb P^N}\int_0^{t}  \|\hat {b} ^N_s - \hat {u}^{GP}_s\|^2 ds=\\
 =\frac {1}{2}\int_0^{t}E_{\mathbb P_N}  \|\hat {b} ^N_s - \hat{u}^{GP}_s\|^2 ds=\\
 =\frac 1 2 t \int _{\R^{3N}} \|\hat {b} ^N({\bf r}_1,\dots,{\bf r}_N) -
  \hat {u}_{GP}({\bf r}_1,\dots,{\bf r}_N)\|^2  \rho_N d{\bf r}_1\dots d{\bf r}_N
 \end{multline}

 \noindent so that we get

 \begin{multline}
  \mathcal H(\mathbb P_N,\mathbb P^N_{GP}) |_{\mathcal F_{t}} =\\
 = \frac 1 2 t
  \int _{\R^{3N}} \sum _{i=1}^{N}\| b ^N_i({\bf r}_1,\dots,{\bf r}_N) -  u_{GP} ({\bf r}_i)\|^2 \rho_N d {\bf r}_1 \dots d {\bf r}_N =\\
  =
 \frac 1 2 N t\int _{\R^{3N}} \| b ^N_1({\bf r}_1,\dots,{\bf r}_N) -  u_{GP} ({\bf r}_1)\|^2 \rho_N d {\bf r}_1 \dots d {\bf r}_N =\\
 =\frac 1 2 N E_{\mathbb P_N}\int_0^{t}  \| b ^N_1 (\hat Y_s) -  u_{GP}(Y_1(s))\|^2 d
s,
 \end{multline}
 \noindent where the symmetry of $\hat b^N $ and $\rho_N$ has been exploited.

 Finally we get the sum of $N$ identical one-particle relative entropies, each of them being defined by

 \begin{multline}
 \bar{\mathcal H} (\mathbb P_N, \mathbb P^N_{GP})|_{\mathcal F_t}=:
 \frac 1 N  \mathcal H(\mathbb P_N,\mathbb P^N_{GP}) |_{\mathcal F_{t}} = \\
 =\frac 1 2 E_{\mathbb P_N}\int_0^{t}  \| b ^N_1(\hat Y_s) -  u^{GP}(Y_1(s))\|^2 d s
 \end{multline}

By  Theorem \ref{theorem2} we deduce that for any $ t
> 0$ the one particle relative entropy does not go to zero in the scaling
limit but it is asymptotically \textit{finite}.

In particular from \eqref{E1} we obtain that:

\begin{multline}\label{limitentropy}
 lim_{N\uparrow +\infty} \bar{\mathcal H} (\mathbb P_N, \mathbb P^N_{GP})|_{\mathcal F_t}= \\
 =lim_{N\uparrow +\infty} \frac 1 2 E_{\mathbb P_N}\int_0^{t}  \| b ^N_1(\hat Y_s) -  u^{GP}(Y_1(s))\|^2 d
s=\int_0^t g\hat{s}\int_{\R^3}|\rho_{GP}|^2d{\bf r}
\end{multline}
\noindent where $\hat{s}\in (0,1]$ is a constant depending on the
interaction potential $v$ through the solution of the zero-energy
scattering equation (see Theorem \ref{theorem2}). For every $t \in
[0,T]$ with an arbitrary finite $T$, the right and side of
\eqref{limitentropy} is a {\it finite constant}.

\vskip10pt

We recall a result, proved in \cite{DeVU}, which extends to our
case a useful chain-rule for the relative entropy.

\begin{lemma}\label{lemma1} We consider $M=X\times Y$, where $X$ and $Y$ are
Polish spaces. Let ${\mathbb P}$ be a measure on $M$ and ${\mathbb
Q}_1$ and ${\mathbb Q}_2$ measures on $X$ and $Y$ respectively. We
denote by $\mathbb Q={\mathbb Q}_1\otimes {\mathbb Q}_2$ the
product measure on $M$ of the measures ${\mathbb Q}_1$ and
${\mathbb Q}_2$ and we suppose that ${\mathbb P} \ll {\mathbb Q}$.
Then we have
\begin{equation}
{\mathcal H}({\mathbb P}|{\mathbb Q})\geq {\mathcal H}({\mathbb
P}_1|{\mathbb Q}_1) + {\mathcal H}({\mathbb P}_2|{\mathbb Q}_2),
\end{equation}
\noindent where ${\mathbb P}_1$ and ${\mathbb P}_2$ are the
marginal probabilities of ${\mathbb P}$.
\end{lemma}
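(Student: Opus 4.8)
The plan is to reduce the claimed superadditivity to the nonnegativity of the relative entropy of $\mathbb{P}$ with respect to the product of its own marginals, i.e. to the nonnegativity of the mutual information. Throughout I treat $\mathbb{P}$, $\mathbb{Q}_1$, $\mathbb{Q}_2$ as probability measures, so that $\mathbb{Q} = \mathbb{Q}_1 \otimes \mathbb{Q}_2$ is $\sigma$-finite and Fubini's theorem applies on the Polish (hence standard Borel) space $M = X \times Y$. Since $\mathbb{P} \ll \mathbb{Q}$, let $p := d\mathbb{P}/d\mathbb{Q} \in L^1(\mathbb{Q})$ be the Radon--Nikodym density. First I would record that the marginals are themselves absolutely continuous: if $\mathbb{Q}_1(A) = 0$ then $(\mathbb{Q}_1 \otimes \mathbb{Q}_2)(A \times Y) = 0$, hence $\mathbb{P}_1(A) = \mathbb{P}(A \times Y) = 0$, so $\mathbb{P}_1 \ll \mathbb{Q}_1$ and symmetrically $\mathbb{P}_2 \ll \mathbb{Q}_2$. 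This guarantees that all three relative entropies in the statement are well defined.

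Next I would identify the marginal densities explicitly, via Fubini,
\[ p_1(x) := \frac{d\mathbb{P}_1}{d\mathbb{Q}_1}(x) = \int_Y p(x,y)\, d\mathbb{Q}_2(y), \qquad p_2(y) := \frac{d\mathbb{P}_2}{d\mathbb{Q}_2}(y) = \int_X p(x,y)\, d\mathbb{Q}_1(x). \]
Because $p_1$ depends only on $x$ and $\mathbb{Q}_2$ is a probability measure, $\int_M p \log p_1\, d\mathbb{Q} = \int_X p_1 \log p_1\, d\mathbb{Q}_1 = \mathcal{H}(\mathbb{P}_1|\mathbb{Q}_1)$, and likewise for the $Y$-factor. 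Subtracting these two identities from $\mathcal{H}(\mathbb{P}|\mathbb{Q}) = \int_M p \log p\, d\mathbb{Q}$ yields the key decomposition
\[ \mathcal{H}(\mathbb{P}|\mathbb{Q}) - \mathcal{H}(\mathbb{P}_1|\mathbb{Q}_1) - \mathcal{H}(\mathbb{P}_2|\mathbb{Q}_2) = \int_M p \log \frac{p}{p_1 p_2}\, d\mathbb{Q}. \]
Here I would observe that $p_1 p_2\, d\mathbb{Q} = (p_1\, d\mathbb{Q}_1) \otimes (p_2\, d\mathbb{Q}_2) = \mathbb{P}_1 \otimes \mathbb{P}_2$, so the right-hand side is exactly $\mathcal{H}(\mathbb{P}|\mathbb{P}_1 \otimes \mathbb{P}_2)$.

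It then remains to prove that this last quantity is nonnegative. I would first note $\mathbb{P} \ll \mathbb{P}_1 \otimes \mathbb{P}_2$: on the set $\{p_1 = 0\}$ one has $\int_Y p(x,\cdot)\, d\mathbb{Q}_2 = 0$, forcing $p = 0$ there $\mathbb{Q}$-a.e., and symmetrically on $\{p_2 = 0\}$, so $\mathbb{P}(\{p_1 p_2 = 0\}) = 0$ and $f := d\mathbb{P}/d(\mathbb{P}_1\otimes\mathbb{P}_2) = p/(p_1 p_2)$ is well defined $\mathbb{P}$-a.e. Then I invoke the Gibbs inequality through the pointwise bound $t\log t \geq t-1$:
\[ \mathcal{H}(\mathbb{P}|\mathbb{P}_1\otimes\mathbb{P}_2) = \int_M f \log f\, d(\mathbb{P}_1\otimes\mathbb{P}_2) \geq \int_M (f-1)\, d(\mathbb{P}_1\otimes\mathbb{P}_2) = 0, \]
since both $\mathbb{P}$ and $\mathbb{P}_1\otimes\mathbb{P}_2$ are probability measures. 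Combined with the decomposition above, this gives the asserted inequality. The main obstacle I anticipate is purely measure-theoretic bookkeeping: justifying the Fubini interchange that produces the marginal densities, and controlling the null sets $\{p=0\}$ and $\{p_1 p_2 = 0\}$ so that every logarithm and quotient is legitimate almost everywhere. The Polish hypothesis is precisely what secures the standard-Borel structure underlying these Fubini/disintegration steps, while the genuine analytic content is the single convexity estimate $t\log t \geq t-1$ used at the end.
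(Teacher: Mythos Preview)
Your proof is correct and follows the standard route: rewrite the defect $\mathcal H(\mathbb P|\mathbb Q)-\mathcal H(\mathbb P_1|\mathbb Q_1)-\mathcal H(\mathbb P_2|\mathbb Q_2)$ as the mutual information $\mathcal H(\mathbb P|\mathbb P_1\otimes\mathbb P_2)$ and then invoke the Gibbs inequality $t\log t\ge t-1$. The measure-theoretic points you flag (absolute continuity of the marginals, the Fubini computation giving $\int_M p\log p_i\,d\mathbb Q=\mathcal H(\mathbb P_i|\mathbb Q_i)$, and the treatment of the null set $\{p_1p_2=0\}$) are all handled correctly.

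As for comparison with the paper: there is nothing to compare against, because the paper does not prove this lemma. It merely states the result and attributes the proof to \cite{DeVU}. Your self-contained argument is exactly the kind of proof one would expect to find there.

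One small remark on rigour: the algebraic splitting $\int p\log p - \int p\log p_1 - \int p\log p_2$ is only legitimate once you know each term is finite. It is worth saying explicitly that if $\mathcal H(\mathbb P|\mathbb Q)=+\infty$ the inequality is trivial, and that otherwise $\mathcal H(\mathbb P_i|\mathbb Q_i)\le \mathcal H(\mathbb P|\mathbb Q)<\infty$ (by Jensen applied to the conditional expectation, or equivalently the data-processing inequality), so that $p_i\log p_i\in L^1(\mathbb Q_i)$ and the Fubini step for $\int_M p\,|\log p_i|\,d\mathbb Q$ is justified. You essentially have this, but making it explicit closes the only gap a careful reader might query.
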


\vskip20pt

We now consider the one-particle marginal ${\mathbb P}^1_N$ of the
symmetric measure ${\mathbb P}_N$, defined on $\Omega^N$.

The following theorem holds (\cite{DeVU}).

\begin{theorem}[Marginal Entropy Estimate]\label{theorem9} For the
relative entropy of the one-particle marginal ${\mathbb P}^1_N$
versus the measure ${\mathbb P}_{GP}$ one has:
\begin{equation}
{\mathcal H}({\mathbb P}^1_N|{\mathbb P}_{GP})\leq \frac{1}{N}
{\mathcal H}({\mathbb P}_N|{\mathbb P}^{N}_{GP})
\end{equation}
\end{theorem}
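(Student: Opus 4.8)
The plan is to derive the estimate by iterating the chain-rule inequality of Lemma \ref{lemma1}, taking advantage of two structural facts: the reference measure $\mathbb P^N_{GP}=\mathbb P_{GP}^{\otimes N}$ is a product, and $\mathbb P_N$ is symmetric, so that all its one-particle marginals coincide with $\mathbb P^1_N$. First I would record the hypotheses needed to invoke Lemma \ref{lemma1} repeatedly: the path space $\Omega=C(\R^+\to\R^3)$ is Polish, hence so is every finite power $\Omega^k$; the full measures satisfy $\mathbb P_N \ll \mathbb P^N_{GP}$, which is exactly what the finite-energy conditions \eqref{eni}--\eqref{enii} together with Girsanov's formula \eqref{derivative} provide; and, since marginalization preserves absolute continuity, every $k$-particle marginal $\mathbb P^{[k]}_N$ of $\mathbb P_N$ (the law of $(Y_1,\dots,Y_k)$ under $\mathbb P_N$) satisfies $\mathbb P^{[k]}_N \ll \mathbb P_{GP}^{\otimes k}$.

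Then I would peel off one coordinate at a time. Writing $\Omega^k=\Omega\times\Omega^{k-1}$ and decomposing the reference measure as $\mathbb P_{GP}^{\otimes k}=\mathbb P_{GP}\otimes\mathbb P_{GP}^{\otimes(k-1)}$, Lemma \ref{lemma1} applied to $\mathbb P^{[k]}_N$ gives
$$
\mathcal H(\mathbb P^{[k]}_N|\mathbb P_{GP}^{\otimes k})\geq \mathcal H(\mathbb P^1_N|\mathbb P_{GP})+\mathcal H(\mathbb P^{[k-1]}_N|\mathbb P_{GP}^{\otimes(k-1)}),
$$
where the first-coordinate marginal of $\mathbb P^{[k]}_N$ is the one-particle marginal $\mathbb P^1_N$ and the remaining-coordinates marginal is $\mathbb P^{[k-1]}_N$. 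The crucial point here is the symmetry of $\mathbb P_N$ established earlier (the processes $Y_i$ are equal in law): exchangeability forces whichever single coordinate we isolate to have the same law $\mathbb P^1_N$, so the peeled-off term is always $\mathcal H(\mathbb P^1_N|\mathbb P_{GP})$, independently of $k$.

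Iterating this inequality from $k=N$ down to $k=1$, and using $\mathbb P^{[N]}_N=\mathbb P_N$ together with $\mathbb P^{[1]}_N=\mathbb P^1_N$, the $N$ identical contributions accumulate to
$$
\mathcal H(\mathbb P_N|\mathbb P^N_{GP})\geq N\,\mathcal H(\mathbb P^1_N|\mathbb P_{GP}),
$$
and dividing by $N$ yields precisely the claimed bound. I expect the only genuinely delicate point to be the careful justification that Lemma \ref{lemma1} applies at every stage: verifying the absolute-continuity hypothesis $\mathbb P^{[k]}_N\ll\mathbb P_{GP}^{\otimes k}$ for the intermediate marginals, and confirming that the two marginals produced by the lemma really are the one-particle and $(k-1)$-particle marginals of $\mathbb P_N$. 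Once this bookkeeping of marginals is in place, the symmetry of $\mathbb P_N$ makes the telescoping immediate.
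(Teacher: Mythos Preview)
Your proposal is correct and follows essentially the same route as the paper's own proof: both iterate Lemma~\ref{lemma1} on the successive decompositions $\Omega^k=\Omega\times\Omega^{k-1}$, use the symmetry of $\mathbb P_N$ to identify every peeled-off one-particle marginal with $\mathbb P^1_N$, and sum the resulting $N$ identical contributions. The only difference is that you spell out more carefully the verification of the hypotheses of Lemma~\ref{lemma1} at each stage (Polishness of $\Omega^k$, absolute continuity of the intermediate marginals), which the paper leaves implicit.
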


\begin{proof}
We can use Lemma \ref{lemma1}. In fact if we decompose
$\Omega^N=\Omega \times \Omega^{N-1}$ the hypothesis in Lemma
\ref{lemma1} are satisfied and we obtain:
\begin{equation}
{\mathcal H}({\mathbb P}_N|{\mathbb P}^N_{GP})\geq {\mathcal
H}({\mathbb P}^1_N|{\mathbb P}_{GP}) + {\mathcal H}({\mathbb
P}^{N-1}_N|{\mathbb P}^{N-1}_{GP}),
\end{equation}
where with ${\mathbb P}^{N-1}_N$ we denote the marginal of
${\mathbb P}_N$ with respect to $N-1$ particles and, of course,
for the symmetry of ${\mathbb P}_N$ it does not matter which
particles we take. Applying again Lemma \ref{lemma1} we have:
\begin{equation}
{\mathcal H}({\mathbb P}^{N-1}_N|{\mathbb P}^{N-1}_{GP})\geq
{\mathcal H}({\mathbb P}^{1}_N|{\mathbb P}_{GP}) + {\mathcal
H}({\mathbb P}^{N-2}_N|{\mathbb P}^{N-2}_{GP}),
\end{equation}
So by taking $N-2$ consecutive applications of Lemma \ref{lemma1}
we obtain the stated result.
\end{proof}

\vskip20pt

The next theorem, proved in \cite{DeVU}, uses mainly the fact that
the relative entropy has the property of the compactness of level
sets.

\vskip10pt

\begin{theorem}[Existence Theorem]\label{theorem10} On the space
$(\Omega,\mathcal F, {\mathbb P}_{GP})$ there exists a probability
measure $\hat {\mathbb P}$ such that:

i) ${\mathbb P}^1_{N_j}$ weakly converges to $\hat {\mathbb P}$
for some subsequence ${\mathbb P}^1_{N_j}$ of ${\mathbb P}^1_{N}$;

ii) $\hat {\mathbb P}$ is absolutely continuous with respect to
${\mathbb P}_{GP}$.
\end{theorem}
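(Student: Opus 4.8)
The plan is to derive a uniform bound on the one-particle relative entropy from the results already established, and then to invoke the compactness of the level sets of the relative entropy functional on the path space. First I would combine the Marginal Entropy Estimate (Theorem \ref{theorem9}) with the asymptotic computation \eqref{limitentropy}. By Theorem \ref{theorem9} one has, restricting to $\mathcal{F}_t$ for a fixed horizon $t\in[0,T]$,
\[
{\mathcal H}(\mathbb{P}^1_N|\mathbb{P}_{GP})|_{\mathcal F_t}\leq \frac{1}{N}\,{\mathcal H}(\mathbb{P}_N|\mathbb{P}^N_{GP})|_{\mathcal F_t}=\bar{\mathcal H}(\mathbb{P}_N,\mathbb{P}^N_{GP})|_{\mathcal F_t},
\]
and the right-hand side converges by \eqref{limitentropy} to the finite constant $\int_0^t g\hat{s}\int_{\R^3}|\rho_{GP}|^2 d{\bf r}$. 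Hence there is a constant $C=C(T)$ with ${\mathcal H}(\mathbb{P}^1_N|\mathbb{P}_{GP})|_{\mathcal F_T}\leq C$ for all $N$; in other words, the whole sequence $\{\mathbb{P}^1_N\}$ lies in a single sublevel set $\{\mu:{\mathcal H}(\mu|\mathbb{P}_{GP})\leq C\}$ of the relative entropy with respect to the reference diffusion measure $\mathbb{P}_{GP}$.

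The second step is to use that, since $\Omega=C(\R^+\to\R^3)$ is Polish and $\mathbb{P}_{GP}$ is a fixed probability measure on it, the map $\mu\mapsto {\mathcal H}(\mu|\mathbb{P}_{GP})$ is a good rate function: it is lower semicontinuous for the topology of weak convergence and its sublevel sets are weakly compact. Consequently the bounded sequence $\{\mathbb{P}^1_N\}$ admits a subsequence $\mathbb{P}^1_{N_j}$ converging weakly to some probability measure $\hat{\mathbb{P}}$, which gives (i). By lower semicontinuity the sublevel set is closed, so the limit satisfies ${\mathcal H}(\hat{\mathbb{P}}|\mathbb{P}_{GP})\leq \liminf_j {\mathcal H}(\mathbb{P}^1_{N_j}|\mathbb{P}_{GP})\leq C<+\infty$. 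Finiteness of the relative entropy then immediately yields (ii), since ${\mathcal H}(\cdot\,|\mathbb{P}_{GP})$ is $+\infty$ whenever absolute continuity fails, so $\hat{\mathbb{P}}\ll\mathbb{P}_{GP}$.

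The main obstacle I anticipate is justifying the weak compactness of the sublevel sets on the path space. Lower semicontinuity of relative entropy is standard, but compactness requires an accompanying tightness argument: a relative-entropy bound against $\mathbb{P}_{GP}$ must be shown to force the family $\{\mathbb{P}^1_N\}$ to be tight in $C(\R^+\to\R^3)$. This is precisely the content of the goodness of the entropy rate function (in the spirit of Donsker--Varahan / Dupuis--Ellis theory), and it is here that the fact that $\mathbb{P}_{GP}$ is itself a well-behaved diffusion measure enters. Some additional care is needed to treat the filtration-restricted entropies uniformly in $t$ and to pass from the bounds on $\mathcal F_T$ to weak convergence on the full path space, but these are technical rather than conceptual difficulties.
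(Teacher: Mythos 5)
Your proposal is correct and takes essentially the same approach as the paper: the paper (which defers the details to \cite{DeVU}) states explicitly that the proof rests on the compactness of the level sets of the relative entropy, after having established exactly your uniform bound by combining Theorem \ref{theorem9} with the finite limit \eqref{limitentropy}. Extracting a weakly convergent subsequence from the entropy sublevel set and deducing absolute continuity of the limit from lower semicontinuity and finiteness of ${\mathcal H}(\cdot\,|\mathbb{P}_{GP})$ is precisely the intended argument.
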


\section{Uniqueness of the limit probability measure}

In this section we prove that $\hat{\mathbb{P}}$ is unique, that
is, not only a subsequence of $\mathbb{P}^1_N$ converges to
$\hat{\mathbb{P}}$, but the entire sequence converges to the same
limit probability measure.
In particular we show that $\hat{\mathbb{P}}=\mathbb{P}_{GP}$.\\
In order to do this we need some results obtained in \cite{MU}.

We consider the following time dependent random subset of $\R^3$

\begin {equation} \label{eq: D}
D_N(t) := \bigcup _{i=2}^N  B ^N(Y_i(t))
\end {equation}

\noindent where $B ^N({\bf r})$  is again the ball with radius
$N^{-\frac{1}{3}-\frac{1}{\delta}} $ centered in ${\bf r}$ (see
Lemma 7.3 in \cite{Lieb2}), and the stopping time

\begin {equation} \label{eq: Tau}
\tau ^N:= inf \{ t \geq 0: Y_1(t) \in D_N(t)         \}
\end {equation}

\subsection{The stopped measures}
We investigate some properties of the measure obtained by stopping
$\mathbb{P}_N$ with respect to the stopping time $\tau_N$.
We start by recalling two relevant properties of $\tau_N$.\\
The first is that for all $t>0$ (\cite{MU},Proposition 2)
\begin{equation}\label{stoppingtime}
\lim_{N \rightarrow + \infty}\mathbb{P}_N(\tau_N \geq t)=1,
\end{equation}
\\
It is important to stress that $\tau_N$ strongly depends on the
measure ${\mathbb P}_N$. We consider, for a fixed value of $N$ and
for the one-particle diffusion $Y_1$, the random region $D_N$ in
$\R^3$ consisting of the union of $N-1$ balls, each localized
where one of the other $N-1$ particles is at time $t$. The joint
distribution of the $N$ particles is described by the probability
law ${\mathbb P}_N$.

The second property is as follows. Let us define the stopped
measures
$$\tilde{\mathbb{P}}_N=\mathbb{P}_N|_{\mathcal{F}_{\tau_N}},$$
where $\mathcal{F}_{\tau_{N}}$ is the sigma-algebra associated
with the stopping time $\tau_N$ given by
$$ \mathcal{F}_{\tau_{N}} =\{A \in {\mathcal{F}},A\cap\{\tau_N \leq t\}\in \mathcal{F}_{t}\}     $$

We note that $\tilde{\mathbb{P}}_N$ is absolutely continuous with
respect to $\mathbb{P}_{GP}^N$ and moreover the following relation
holds:
$$\frac{d\tilde{\mathbb{P}}_N}{d\mathbb{P}_{GP}^N}=E_{\mathbb{P}^N_{GP}}\left[\left.\frac{d\mathbb{P}_N}
{d\mathbb{P}_{GP}^N}\right|\mathcal{F}_{\tau_N}\right].$$ In this
way we can look at $\tilde{\mathbb{P}}_N$ as a measure defined on
the whole sigma algebra $\mathcal{F}_T$ having, by definition, the
above
density.\\
Since $\tau_N$ represents the stopping time before the
one-particle process $Y_1$ enters in the \textit{interaction
random set} $D_N(t)$ generated by the other particles, one can
prove that
$$\lim_{N \rightarrow + \infty}\frac{1}{N}H(\tilde{\mathbb{P}}_N||\mathbb{P}_{GP}^N)=0$$
(see \cite{MU}, Proposition3).\\
Let us now consider $\tilde{\mathbb{P}}_N^1$ as the measure
$\tilde{\mathbb{P}}_N$ projected into the sigma-algebra
$\mathcal{F}^1 $ generated by the first particle, that is the
measure such that
$$\frac{d\tilde{\mathbb{P}}^1_N}{d\mathbb{P}_{GP}}=E_{\mathbb{P}_{GP}^N}\left[\left. \frac{ d\tilde{\mathbb{P}}_N}{d\mathbb{P}_{GP}^N}
\right|\mathcal{F}^1 \right]. $$ Of course in general
$\tilde{\mathbb{P}}_N^1\not = \mathbb{P}_N^1$ and there is no
direct connection between the two measures. While
$\tilde{\mathbb{P}}_N$ has a clear physical meaning being the
measure corresponding to the $N$ particles process stopped
according to $\tau_N$, $\tilde{\mathbb{P}}_N^1$ has no physical
meaning. In fact when we focus our attention only on the single
particle, we cannot know when it will interact with the other
particles. This is only possible when we consider all the
particles. From the mathematical point of view, this physical fact
corresponds to the non $\mathcal{F}^1-$ measurability of the
stopping time
$\tau_N$.\\
By a reasoning similar to the one in Theorem \ref{theorem9} we can
prove that
$$H(\tilde{\mathbb{P}}_N^1||\mathbb{P}_{GP})\leq \frac{1}{N}H(\tilde{\mathbb{P}}_N||\mathbb{P}_{GP}^N)\rightarrow 0.$$
By the well-known Csiszar-Kullback inequality
(\cite{Csiszar},\cite{Kullback}), we deduce from this that the
sequence $\tilde{\mathbb{P}}_N^1$ converges in total variation to
$\mathbb{P}_{GP}$.

We have seen in Section 5 that in $\R^d$  the total variation
distance between two absolutely continuous measures is equal to
$\frac{1}{2}$ times the $L^1-$ distance between the corresponding
two probability densities . In our path space the following result
suffices.

\begin{proposition}
Given the two absolutely continuous measures
$\tilde{\mathbb{P}}^1_N$ and ${\mathbb P}_{GP}$ we have that
\begin{equation}\label{distvariation}
d_{TV}(\tilde{\mathbb{P}}^1_N,{\mathbb{P}}_{GP})\geq
\frac{1}{2}\int|\frac{d\tilde{\mathbb{P}}^1_N}{d{\mathbb{P}}_{GP}}-1|{d{\mathbb{P}}_{GP}}
\end{equation}
\end{proposition}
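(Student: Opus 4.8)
The plan is to reduce the statement to the same Scheffé-type computation carried out in the footnote of Section~5, but now taking $\mathbb{P}_{GP}$ itself as the reference measure on the path space rather than Lebesgue measure on $\R^{d}$. Since $\tilde{\mathbb{P}}^{1}_{N}$ is absolutely continuous with respect to $\mathbb{P}_{GP}$, the Radon--Nikodym derivative $f:=\frac{d\tilde{\mathbb{P}}^{1}_{N}}{d\mathbb{P}_{GP}}$ is well defined, and for every measurable set $A\in\mathcal{F}$ one has the basic identity
\[
\tilde{\mathbb{P}}^{1}_{N}(A)-\mathbb{P}_{GP}(A)=\int_{A}(f-1)\,d\mathbb{P}_{GP}.
\]
This is the abstract analogue of the pointwise density difference appearing in the Scheffé argument, and it is the starting point for everything that follows.

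Next I would exploit that both $\tilde{\mathbb{P}}^{1}_{N}$ and $\mathbb{P}_{GP}$ are probability measures, so that $\int(f-1)\,d\mathbb{P}_{GP}=\int f\,d\mathbb{P}_{GP}-1=0$. Splitting the integrand according to the sign of $f-1$, this forces the positive and negative contributions to balance, i.e.
\[
\int_{\{f>1\}}(f-1)\,d\mathbb{P}_{GP}=\int_{\{f\le 1\}}(1-f)\,d\mathbb{P}_{GP}=\frac{1}{2}\int\abs{f-1}\,d\mathbb{P}_{GP}.
\]
This is the one genuinely substantive step: it identifies the integral over the ``overshoot'' set $\{f>1\}$ with exactly half of the full $L^{1}(\mathbb{P}_{GP})$ distance of $f$ from $1$.

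Finally I would choose the test set $A^{+}:=\{f>1\}$ inside the definition of total variation. Since $d_{TV}(\tilde{\mathbb{P}}^{1}_{N},\mathbb{P}_{GP})$ is by definition a supremum over all measurable sets, it is bounded below by its value on this particular $A^{+}$, whence
\[
d_{TV}(\tilde{\mathbb{P}}^{1}_{N},\mathbb{P}_{GP})\ge\bigl|\tilde{\mathbb{P}}^{1}_{N}(A^{+})-\mathbb{P}_{GP}(A^{+})\bigr|=\int_{\{f>1\}}(f-1)\,d\mathbb{P}_{GP}=\frac{1}{2}\int\abs{f-1}\,d\mathbb{P}_{GP},
\]
which is precisely \eqref{distvariation}. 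There is no serious obstacle here: the argument is entirely measure-theoretic and the Polish/path-space setting plays no role beyond guaranteeing the existence of the Radon--Nikodym derivative. I would only remark that the reverse bound (and hence equality) also holds by decomposing an arbitrary $A$ into its intersections with $\{f>1\}$ and $\{f\le1\}$, but since the application only needs to transfer the total variation convergence $\tilde{\mathbb{P}}^{1}_{N}\to\mathbb{P}_{GP}$ into $L^{1}(\mathbb{P}_{GP})$ convergence of the densities to $1$, the lower bound \eqref{distvariation} is all that is required.
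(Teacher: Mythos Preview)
Your proof is correct and follows essentially the same Scheff\'e-type approach as the paper: both arguments choose the test set on which the density exceeds $1$ and exploit that the supremum defining $d_{TV}$ dominates the value there. The only cosmetic difference is that the paper bounds $d_{TV}$ separately from below by the integrals over $A=\{f\ge 1\}$ and over $A^{c}$ and then adds the two inequalities, whereas you use the normalization $\int(f-1)\,d\mathbb{P}_{GP}=0$ once to identify the single integral over $\{f>1\}$ with $\tfrac{1}{2}\int|f-1|\,d\mathbb{P}_{GP}$ directly.
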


\begin{proof}
We recall the definition
\begin{equation}
d_{TV}(\tilde{{\mathbb P}}^1_N,{\mathbb{P}}_{GP}):=\sup_{A\in
{\mathcal
F}^1_T}|\tilde{{\mathbb{P}}}^1_N(A)-{\mathbb{P}}_{GP}(A)|\\
=\sup_{A\in {\mathcal F}^1_T}|\int_A
(\frac{d\tilde{\mathbb{P}}^1_N}{d\mathbb{P}_{GP}}-1){d\mathbb{P}_{GP}}|
\end{equation}
By taking
$$A=\{(\frac{d\tilde{\mathbb{P}}^1_N}{d\mathbb{P}_{GP}}-1)\geq 0\}$$
we have that
\begin{equation}
d_{TV}(\tilde{\mathbb{P}}^1_N,{\mathbb{P}}_{GP})\geq
\int_A(\frac{d\tilde{\mathbb{P}}^1_N}{d\mathbb{P}_{GP}}-1){d\mathbb{P}_{GP}}
\end{equation}
and
\begin{equation}
d_{TV}(\tilde{\mathbb{P}}^1_N,{\mathbb{P}}_{GP})\geq
-\int_{A^c}(\frac{d\tilde{\mathbb{P}}^1_N}{d\mathbb{P}_{GP}}-1)d\mathbb{P}_{GP}
\end{equation}
As a consequence
\begin{equation}
\int|\frac{d\tilde{\mathbb{P}}^1_N}{d\mathbb{P}_{GP}}-1|{d\mathbb{P}_{GP}}\leq
2d_{TV}(\tilde{\mathbb P}^1_N,{\mathbb{P}}_{GP})
\end{equation}

\end{proof}
Since $\tilde{\mathbb{P}}_N^1$ converges in total variation to
${\mathbb{P}}_{GP}$,  by the previous theorem  we obtain that
$\frac{d\tilde{\mathbb{P}}^1_N}{d{\mathbb{P}}_{GP}}$ converges to
$1$ strongly in $L^1(\mathcal{F}^1_T,\mathbb{P}_{GP})$.

\subsection{The unique weak limit}

Let us now consider a subsequence of $\mathbb{P}_N^1$ weakly
converging to $\hat{\mathbb{P}}$. For simplicity we denote this
subsequence again by $\mathbb{P}_N^1$.\\
Let us recall that, the weak limit $\hat{\mathbb{P}}$ being
absolutely continuous with respect to $\mathbb{P}_{GP}$, for every
$A \in \mathcal{F}^1_T, T>0,$ such that $\hat{\mathbb{P}}(\partial
A)=\mathbb{P}_{GP}(\partial A) =0$ one has by definition of weak
convergence of measures that
$$\lim_{N \rightarrow +\infty}E_{\mathbb{P}^1_N}[I_A]=E_{\hat{\mathbb{P}}}[I_A].$$
\\
Let us put
$$B_N=\{\tau_N \geq T\}.$$
We recall that $A \cap B_N \in \mathcal{F}_{\tau_N}$. In fact for
every $t\in [0,T)$ we have that
$$A \cap B_N \cap \{\tau \leq t\}=\emptyset \in \mathcal{F}_t.$$
Obviously
$$I_A=I_{A \cap B_N}+I_{A \cap B_N^C},$$
where $B_N^C$ is the complement set of $B_N$. By the properties of
the characteristic function $I_K$ of any set $K$ we have that
$$I_{A \cap B_N} \leq I_A \leq I_{A \cap B_N} + I_{B_N^C},$$
from which we obtain that
$$E_{\mathbb{P}_N}[I_{A \cap B_N}] \leq E_{\mathbb{P}_N}[I_A] \leq E_{\mathbb{P_N}}[I_{A \cap B_N}]+\mathbb{P}_N({B_N^C}).$$
From the latter relation and the fact that  $\mathbb{P}_N({B_N^C})
\rightarrow 0$ as $N \rightarrow +\infty$, for every fixed
$\epsilon>0$, we get for $N$ sufficiently big
$$E_{\mathbb{P}_N}[I_{A \cap B_N}] \leq E_{\mathbb{P}_N}[I_A] \leq E_{\mathbb{P_N}}[I_{A \cap B_N}]+\epsilon.$$
By the last relation and
$E_{\mathbb{P}_N}[I_A]=E_{\mathbb{P}_N^1}[I_A]$ if we send $N$ to
infinity and set $f_+=\limsup E_{\mathbb{P}_N}[I_{A \cap B_N}]$
and  $f_-=\liminf E_{\mathbb{P}_N}[I_{A \cap B_N}]$, we obtain
$$f_+\leq E_{\hat{\mathbb{P}}}[I_A] \leq f_-+\epsilon.$$
Finally, $\epsilon>0$ being arbitrary we have that
$$f=f_+=f_-=E_{\hat{\mathbb{P}}}[I_A].$$
We now study what happens to $E_{\mathbb{P}_N}[I_{A \cap B_N}]$ as $N\rightarrow +\infty$.\\
First of all, since $A \cap B_N \in \mathcal{F}_{\tau_N}$ we have
that
$$E_{\mathbb{P}_N}[I_{A \cap B_N}]=E_{\tilde{\mathbb{P}}_N}[I_{A \cap B_N}].$$
Recalling that $I_{A \cap B_N}=I_A I_{B_N}$, consequently we
obtain
\begin{eqnarray*}
E_{\tilde{\mathbb{P}}_N}[I_{A \cap B_N}]&=&E_{\tilde{\mathbb{P}}_N}[I_A I_{B_N}]\\
&=&E_{\tilde{\mathbb{P}}_N^1}[E_{\tilde{\mathbb{P}}_N}[I_A I_{B_N}|\mathcal{F}^1_T]]\\
&=&E_{\tilde{\mathbb{P}}_N^1}[I_AE_{\tilde{\mathbb{P}}_N}[I_{B_N}|\mathcal{F}^1_T]]\\
&=&E_{\mathbb{P}_{GP}}\left[I_A\frac{d\tilde{\mathbb{P}}_N^1}{d\mathbb{P}_{GP}}E_{\tilde{\mathbb{P}}_N}[I_{B_N}|\mathcal{F}^1_T]\right].
\end{eqnarray*}
Let us set
$$H_N:=E_{\tilde{\mathbb{P}}_N}[I_{B_N}|\mathcal{F}^1].$$
By the properties of the conditional expectation and of the
characteristic function we have that  $0 \leq H_N \leq 1$.
Therefore
$H_N$ is a sequence which is bounded in norm in $L^{\infty}(\mathcal{F}^1_T, \mathbb{P}_{GP})$.\\
Since $L^{1}(\mathcal{F}^1_T, \mathbb{P}_{GP})$ is  separable,
 ${\mathcal F}^1_T $ being a sub-$\sigma$ algebra of the
essentially separable $\sigma$ algebra ${\mathcal F}_T $,  then by
considering the dual pair $\langle L^1 , L^{\infty}\rangle$),
there exists a subsequence $H_{N_j}$ which is weak* convergent to
$\tilde{H}$ in $L^{\infty}$ by a well-known result on weak*
convergence.\\
We have previously proved that
$\frac{d\tilde{\mathbb{P}}_N^1}{d\mathbb{P}_{GP}} \rightarrow 1$
strongly in $L^1$, and so, $I_A$ being a bounded function, we have
that
$$I_A \frac{d\tilde{\mathbb{P}}_N^1}{d\mathbb{P}_{GP}} \rightarrow I_A,$$
strongly in $L^1$. \\
By the properties of weak and strong convergence we obtain that
$$E_{\mathbb{P}_{GP}}\left[I_A\frac{d\tilde{\mathbb{P}}_{N_j}^1}{d\mathbb{P}_{GP}}H_{N_j}\right] \rightarrow E_{\mathbb{P}_{GP}}[I_A \tilde{H}],$$
i.e.
\begin{equation}\label{equality}
E_{\mathbb{P}_{GP}}[I_A \tilde{H}]=f=E_{\hat{\mathbb{P}}}[I_A].
\end{equation}

Since this happens for an arbitrary set $A \in \mathcal{F}^1_T$
which is a $\mathbb{P}_{GP}-$ continuity set (i.e. $
{\mathbb{P}_{GP}}(\partial A)=0$) then the equality
\eqref{equality} is true for any set in $\mathcal{F}^1_T$. In
fact, let us consider the class $\mathcal{C}_b$ of the finite
intersections of open balls whose boundaries have null
${\mathbb{P}_{GP}}$ measure. This class is a $\pi$ system (i.e. it
is closed with respect to finite intersections) because
$$ \partial (B\cap C) \subseteq (\partial B) \cup (\partial C)
$$
The equality \eqref{equality} is evidently true for any set in
$\mathcal{C}_b$. Since $\Omega$ is separable, then
$\sigma(\mathcal{C}_b)= \mathcal{F}^1_T$, i.e. the $\mathcal{C}_b$
generates the Borel $\sigma-$ algebra $\mathcal{F}^1_T$. In fact
we know that by separability the finite intersections of open
balls generates $\mathcal{F}^1_T$ because each open set is a
countable union of open balls. On the other hands, one can prove
that the open balls having boundary with null ${\mathbb{P}_{GP}}$
measure are dense in the following sense. Given an arbitrary open
ball $B(x,\epsilon), \epsilon>0$, there exists an $r\in
(0,\epsilon)$ such that the boundary of $B(x,r)$ has
${\mathbb{P}_{GP}}$ measure zero because for different $r$ these
boundaries have empty intersection. Therefore we have that
$$\frac{d\hat{\mathbb{P}}}{d\mathbb{P}_{GP}}=\tilde{H}.$$
From this fact we deduce that not only the subsequence $H_{N_j}$
converges to $\tilde{H}$, but also the entire sequence $H_N$
converges to $\tilde{H}$ weakly* in $L^{\infty}$. Moreover since
$0\leq H_N \leq 1$ by the properties of weak* convergence we must
have $0\leq\tilde{H}\leq 1$.

Of course from $E_{\mathbb{P}_{GP}}[\tilde{H}]=1$, it necessarily
follows that $\tilde{H}=1$. This implies that
$$\hat{\mathbb{P}}=\mathbb{P}_{GP}.$$

Since every weakly convergent subsequence of $\mathbb{P}^1_N$
converges to $\mathbb{P}_{GP}$, we can deduce that the entire
sequence  $\mathbb{P}^1_N$ weakly converges to $\mathbb{P}_{GP}$.\\

We have proved the following:

\begin{theorem} (A weak convergence result)\\
Under the hypothesis h1), h2), and h3), the one-particle marginal
measure $\mathbb{P}^1_N$ weakly converges to $\mathbb{P}_{GP}$,
the weak solution to \eqref{SDE}, where $\phi_{GP}$ is the unique
minimizer of the GP functional \eqref{GPfunc}.
\end{theorem}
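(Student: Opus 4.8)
The plan is to combine the relative compactness furnished by Theorem \ref{theorem10} with a sharp identification of every possible weak limit, the identification being driven by the stopped measures and the near-sure lower bound \eqref{stoppingtime} on the stopping times $\tau_N$. Since Theorem \ref{theorem10} (and the compactness of the entropy level sets behind it) guarantees that every subsequence of $\mathbb{P}^1_N$ admits a weakly convergent further subsequence whose limit $\hat{\mathbb{P}}$ is absolutely continuous with respect to $\mathbb{P}_{GP}$, it suffices to prove that every such $\hat{\mathbb{P}}$ equals $\mathbb{P}_{GP}$; the convergence of the entire sequence then follows. So I fix a subsequence, relabelled $\mathbb{P}^1_N \rightharpoonup \hat{\mathbb{P}}$, and a time $T>0$.

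First I would test weak convergence against indicators of $\mathbb{P}_{GP}$-continuity sets $A\in\mathcal{F}^1_T$, i.e. those with $\mathbb{P}_{GP}(\partial A)=0$, which by absolute continuity also satisfy $\hat{\mathbb{P}}(\partial A)=0$, so that $E_{\mathbb{P}^1_N}[I_A]\to E_{\hat{\mathbb{P}}}[I_A]=:f$. The decisive manoeuvre is to replace $I_A$ by $I_{A\cap B_N}$ with $B_N=\{\tau_N\geq T\}$: because $A\cap B_N\in\mathcal{F}_{\tau_N}$ while $\mathbb{P}_N(B_N^C)\to 0$ by \eqref{stoppingtime}, one still has $E_{\mathbb{P}_N}[I_{A\cap B_N}]\to f$, and on $\mathcal{F}_{\tau_N}$ the law $\mathbb{P}_N$ agrees with the stopped measure $\tilde{\mathbb{P}}_N$. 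Conditioning on $\mathcal{F}^1_T$ then produces
$$
E_{\mathbb{P}_N}[I_{A\cap B_N}]=E_{\mathbb{P}_{GP}}\Big[I_A\,\frac{d\tilde{\mathbb{P}}^1_N}{d\mathbb{P}_{GP}}\,H_N\Big],\qquad H_N:=E_{\tilde{\mathbb{P}}_N}[I_{B_N}\mid\mathcal{F}^1].
$$

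The main obstacle is exactly that $\tau_N$ fails to be $\mathcal{F}^1$-measurable, so the one-particle process cannot detect its own stopping time; this is what forces the detour through the $N$-particle stopped measure, where $\tau_N$ is measurable, and it is the reason a direct single-particle argument is unavailable. To close the estimate I would invoke two facts already secured: the density $d\tilde{\mathbb{P}}^1_N/d\mathbb{P}_{GP}\to 1$ strongly in $L^1(\mathbb{P}_{GP})$, coming from the Csiszar-Kullback total-variation convergence of $\tilde{\mathbb{P}}^1_N$, and that $0\leq H_N\leq 1$ is bounded in $L^\infty$, hence along a further subsequence weak* convergent to some $\tilde{H}$ with $0\leq\tilde{H}\leq 1$. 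Pairing the strong $L^1$ limit $I_A$ against the weak* limit $\tilde{H}$ yields $E_{\mathbb{P}_{GP}}[I_A\,\tilde{H}]=f=E_{\hat{\mathbb{P}}}[I_A]$.

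Finally I would upgrade this from continuity sets to all of $\mathcal{F}^1_T$: the $\mathbb{P}_{GP}$-continuity sets among finite intersections of open balls form a $\pi$-system generating $\mathcal{F}^1_T$ (using separability of $\Omega$ together with the observation that for each centre only countably many radii can give a charged boundary), so the identity extends to every $A\in\mathcal{F}^1_T$. This pins down $d\hat{\mathbb{P}}/d\mathbb{P}_{GP}=\tilde{H}$ and, the limit being independent of the chosen weak* subsequence, forces the whole sequence $H_N\rightharpoonup^{*}\tilde{H}$. Since $\tilde{H}\in[0,1]$ and $E_{\mathbb{P}_{GP}}[\tilde{H}]=\hat{\mathbb{P}}(\Omega)=1$, it must equal $1$ almost everywhere, whence $\hat{\mathbb{P}}=\mathbb{P}_{GP}$. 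As every weakly convergent subsequence of $\mathbb{P}^1_N$ has this same limit, the entire sequence converges weakly to $\mathbb{P}_{GP}$, which is the weak solution of \eqref{SDE} determined by the unique minimizer $\phi_{GP}$ of \eqref{GPfunc}.
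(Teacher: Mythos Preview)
Your proposal is correct and follows essentially the same route as the paper: start from the subsequential weak limit $\hat{\mathbb{P}}\ll\mathbb{P}_{GP}$ given by Theorem~\ref{theorem10}, replace $I_A$ by $I_{A\cap B_N}$ with $B_N=\{\tau_N\geq T\}$ using \eqref{stoppingtime}, pass to the stopped measure $\tilde{\mathbb{P}}_N$ and condition on $\mathcal{F}^1_T$, then combine the strong $L^1$ convergence of $d\tilde{\mathbb{P}}^1_N/d\mathbb{P}_{GP}$ with weak* compactness of $H_N$ and the $\pi$-system extension to identify $\tilde{H}=1$ and hence $\hat{\mathbb{P}}=\mathbb{P}_{GP}$. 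The organization, the key lemmas invoked, and the concluding uniqueness argument all match the paper's own proof.
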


\begin{remark}
For the proof of our convergence result we did not have at
disposal the well-known Lemma 11.1.1 by Stroock-Varadhan
(\cite{StroockVaradhan}). The reason for this is that we can not
decouple the sequence of stopping times $\tau_N$ from the sequence
of probability measures ${\mathbb P}_N$ because they are
intrinsically coupled, in our case.
\end{remark}

\vskip10pt \noindent {\bf Acknowledgment.} The authors have
benefited from the gracious hospitality of the Centre
Interfacultaire Bernoulli at the EPFL, Lausanne, during the
semester: Geometric Mechanics, Variational and Stochastic Methods
(1 January, 30 June 2015) organized by S. Albeverio, A.B. Cruzeiro
and D. Holm. The second and third authors also thanks the NSF for
financial support.

\end{document}